\documentclass[reqno, 12pt]{amsart}
\usepackage{cite}
\usepackage[english]{babel}
\usepackage{amssymb,amsmath,amsfonts,amsthm}
\usepackage[utf8]{inputenc}
\usepackage{cmap}
\usepackage[T1]{fontenc}
\usepackage[all]{xy}
\usepackage{mathbbol}
\usepackage{graphicx}
\usepackage{xcolor}
\usepackage{accents}
\usepackage{tikz-cd	}

\newtheorem{definition}{Definition}[section]
\newtheorem{theorem}{Theorem}[section]
\newtheorem{proposition}{Proposition}[section]
\newtheorem{lemma}{Lemma}[section]

\newtheorem{remark}{Remark}[section]

\DeclareMathOperator{\tr}{Tr}
\DeclareMathOperator{\rank}{rank}
\DeclareMathOperator{\SL}{SL}
\DeclareMathOperator{\GL}{GL}


\def\tr{\mathrm{Tr}}
\tolerance=5000 \topmargin -0.5cm \oddsidemargin=-1cm
\evensidemargin=-1cm \textwidth 19cm \textheight 22.5cm

\numberwithin{equation}{section}

\begin{document}

	\title[Calogero-Moser space and invariants]{Calogero-Moser spaces and the invariants of two  matrices of degree 3}
	\author{Z. Normatov}
	\address{[Zafar Normatov]  V.~I.~Romanovskiy Institute of Mathematics,  Uzbekistan Academy of Sciences
		University Street 4b, 100174 Tashkent, Uzbekistan}
	\email{z.normatov@mathinst.uz}
	\author{R. Turdibaev}
	\address{[Rustam Turdibaev]   V.~I.~Romanovskiy Institute of Mathematics,  Uzbekistan Academy of Sciences
		University Street 4b, 100174 Tashkent, Uzbekistan\\
		\newline AKFA University, 1st Deadlock 10, Kukcha Darvoza, 100095 Tashkent, Uzbekistan}
	\email{r.turdibaev@mathinst.uz}
	
	\begin{abstract}
		We find a minimal set of generators for the coordinate ring of Calogero-Moser space $\mathcal{C}_3$ and the algebraic relations among them explicitly. We give a new presentation for the algebra of  $3\times3$ invariant matrices involving the defining relations of $\mathbb{C}[\mathcal{C}_3]$. We find an explicit description of the commuting variety of  $3\times3$ matrices and its orbits under the action of the affine Cremona group.
	\end{abstract}
	\subjclass[2010]{16R30, 13A50, 14R20, 14L30}
	\keywords{Calogero-Moser space, minimal set of generators, defining relations, transitivity, commuting variety.}
	
	\maketitle
	
	\section{Introduction}
	
	Let $\mathcal{M}_{n}$ be the $\mathbb{C}$-algebra of $n\times n$ matrices
	over $\mathbb{C}$, and let $d$ be a positive integer.
	The group $\mathrm{GL}_n(\mathbb{C})$ acts on the direct product  $\mathcal{M}_n^d$
	of $d$ copies of $\mathcal{M}_n$ by the simultaneous conjugation:
	\begin{equation}
		\label{actiongl}
		g \cdot (X_1,...,X_d) = (gX_1g^{-1},\dots,gX_dg^{-1})\, ,
		\quad   g \in \mathrm{GL}_n(\mathbb{C}) \, .
	\end{equation}

	This action induces an action of $\mathrm{GL}_n$ on the algebra
	$\mathbb{C}[\mathcal{M}_n^d]$ of polynomial functions on $\mathcal{M}_n^d$. By the general results of invariant theory of classical groups it is known that the algebra of invariant polynomials of the general linear group on $d$ matrices $\mathbb{C}[\mathcal{M}_n^d]^{\mathrm{GL}_n}$ is finitely generated. Moreover, a result of Procesi \cite{Pr}
	and Razmyslov \cite{Ra} states that $\mathbb{C}[\mathcal{M}_n^d]^{\mathrm{GL}_n}$
	is generated by the traces
	\[ \tr(Z_1\cdot \cdot \cdot Z_k)\, , \quad Z_1,...,Z_k \in \{X_1,...,X_d\}\, , \,
	1 \le k \le n^2 , \]
	and  every relation among them is a consequence of the Cayley-Hamilton theorem.

	It is well-known \cite{D} that traces $\tr(X), \tr(Y), \tr(X^2), \tr(XY), \tr(Y^2)$ generate $\mathbb{C}[\mathcal{M}_2\times \mathcal{M}_2]^{\mathrm{GL}_2}$ and there are no relations between the generators (see e.g.~\cite{DF}). A minimal system of eleven generators for $\mathbb{C}[\mathcal{M}_3\times \mathcal{M}_3]^{\GL_3}$ was found by Teranishi \cite{T} to be $\tr(X), \tr(Y)$ and
	\begin{align} 
		&	\tr(X^2), \tr(XY), \tr(Y^2),  \tr(X^3), 	\tr(X^2Y), \tr(XY^2), \tr(Y^3), \label{C32gen1}\\
		&	\tr(X^2Y^2), \tr(X^2Y^2XY) \label{C32gen2}.
	\end{align}
	A complicated algebraic relation of the square of $\tr(X^2Y^2XY)$ via the rest of the generators was given by Nakamoto \cite{Na}. Turns out, there is only one defining relation and it is much simpler if one uses another minimal set of  generators, which is the main result in \cite{ADS}: along with $\tr(X)$ and $\tr(Y)$, take traces of \eqref{C32gen1} with $X$ and $Y$ replaced with their traceless versions $A$ and $B$, correspondingly, and instead of (\ref{C32gen2}) take 
	\begin{align}
		v& = \tr(A^2B^2)-\tr(ABAB) \label{v}\\
		w &=  \tr(A^2B^2AB) -\tr(A^2BAB^2) \label{w}.
	\end{align}
	
	In this work we further simplify the defining relation found by authors of~\cite{ADS}. While we use the same set of minimal generators, we establish in section \ref{new_presentation} that the the defining relation is a polynomial of $$\tr(A^2),\tr(AB), \tr(B^2),v,w$$ and the the defining relations of the coordinate ring of the Calogero-Moser space~$\mathcal{C}_3$. 
	
	For an integer $n\ge 0$, let $\bar{\mathcal C}_n$ be the subset of
	$\mathcal{M}_n\times \mathcal{M}_n$ defined as
	\begin{equation*}
		\{ (X,Y) \in \mathcal{M}_n\times \mathcal{M}_n
		\mid \rank([X,Y] +I_n)=1\} ,
	\end{equation*}
	where $I_n$ is the identity $n\times n$ matrix.
	The action \eqref{actiongl} on $\mathcal{M}_n\times \mathcal{M}_n$ restricts to an action on
	$\bar{\mathcal C}_n$,
	and we define the $n$-th \textit {Calogero-Moser} space $\mathcal{C}_n$ to be the quotient variety $\bar{\mathcal{C}}_n / \! / \mathrm{GL}_n$.  Named after a class of integrable systems in classical mechanics, Calogero-Moser spaces play an important role in geometry and representation theory. These spaces were studied in detail by Wilson~\cite{W}, who proved that $\mathcal{C}_n$ is a smooth affine irreducible complex symplectic variety of dimension $2n$.

	The embedding of $\mathcal{C}_n$ into $(\mathcal{M}_{n}\times \mathcal{M}_{n}) /\negthickspace\, /\mathrm{GL}_n$
	allows us to view  $\mathbb{C}[\mathcal{C}_n]$, the coordinate ring of $\mathcal{C}_n$, as a quotient of
	the algebra $\mathbb{C}[\mathcal{M}_n\times \mathcal{M}_n]^{\mathrm{GL}_n}$.
	So it is natural to ask if a refinement of the Procesi-Razmyslov theorem gives an explicit presentation for $\mathbb{C}[\mathcal{C}_n]$. For presenting the coordinate ring of the Calogero-Moser space for $n=2$, it is more convenient to use traceless matrices. Denoting by $A$ and $B$ the traceless versions of $X$ and $Y$, correspondingly, one can show 
	$$ \mathbb{C}[\mathcal{C}_2] = \mathbb{C}[u_1,u_2,u_3,u_4,u_5]/(u_4^2-u_3u_5-1) \, , $$
	where $u_1=\tr(X), u_2=\tr(Y), u_3=\tr(A^2), u_4=\tr(AB), u_5=\tr(B^2)$. In this work we establish the presentation for $n=3$ and discuss its fruitful consequences in simplification of the defining relation of the ring of invariants of two matrices and the presentation of commuting variety of $3\times 3$ matrices.  
	
	The paper is organized as follows. First, in Section \ref{prelim} we establish that  
	\begin{equation}\label{generators}
		\begin{split}    
			& a_1=\tr(X),\ a_2=\tr(Y), \ a_3=\tr(A^2), \ a_4=\tr(AB), \ a_5=\tr(B^2),\\ 
			& a_6=\tr(A^3), \ a_7=\tr(A^2B), \ a_8=\tr(AB^2), \ a_9=\tr(B^3)
		\end{split}
	\end{equation}
	are generators of $\mathbb{C}[\mathcal{C}_3]$, where $A=X-\frac13a_1I_3, \ B=Y-\frac13a_2I_3$. In Section 
	\ref{relations_C3} we establish the following identities
	\begin{equation} \label{id1z}
		\begin{split}
			a_3a_9-2a_4a_8+a_5a_7=0\\
			a_5a_6-2a_4a_7+a_3a_8=0\\
			9a_3-a_3a_4^2+a_3^2a_5+6a_6a_8-6a_7^2=0\\
			9a_4-a_4^3+a_3a_4a_5+3a_6a_9-3a_7a_8=0\\
			9a_5-a_4^2a_5+a_3a_5^2+6a_7a_9-6a_8^2=0
		\end{split}
	\end{equation}
	for any pairs of matrices from $\mathcal{C}_3$. Later, in Section \ref{transitivity} we define a variety as a quotient of $\mathbb{C}^9$ by  relations (\ref{id1z}) and prove that the group of unimodular automorphisms of $\mathbb{C}[x,y]$, also known as the affine Cremona group, acts transitively on it. It is known by \cite[Theorem 1.3]{BW} that the same group acts transitively on $\mathcal{C}_3$. Hence, these varieties coincide and we obtain an explicit presentation of the coordinate ring of Calogero-Moser space $\mathcal{C}_3$ as
	$$ \mathbb{C}[a_1,a_2]\otimes \mathbb{C}[a_3,a_4,a_5,a_6,a_7,a_8,a_9]/I,$$
	where $I$ is generated by (\ref{id1z}).  Motivated by identities (\ref{id1z}), in section \ref{Calogero-Moser_type} for each point of a cuspidal curve $w^2+\frac4{27}v^3=0$ we define a Calogero-Moser type spaces $\mathcal{C}_{3,v}$. We establish that all of them are isomorphic to $\mathcal{C}_3$, unless $v=0$. In the later case, it is the commuting variety and Theorem \ref{main2} shows that its explicit presentation is very similar to the presentation of Calogero-Moser space $\mathcal{C}_3$. Moreover, we find that the commuting variety has three orbits under the affine Cremona group action in section \ref{C_30_last}.

	\section{Preliminaries}\label{prelim}
	\numberwithin{equation}{section} 
	Another presentation
	for $\mathcal{C}_n$ can be given as the space of all quadruples $(X,Y,c,r)$ where $X$ and $Y$ are $n\times n$ matrices and
	$c$ and $r$ are column and row vectors of the length $n$.
	Let $\tilde{\mathcal{C}}_n$ be the subspace
	consisting of all $(X,Y,c,r)$ satisfying
	\begin{equation}\label{vector-covector}
		XY-YX+I_n = c r .
	\end{equation}
	If we define the action of $\mathrm{GL}_n$  on $\tilde{\mathcal{C}}_n$  by
	$$ g \cdot (X,Y,c,r)=(gXg^{-1}, gYg^{-1}, gc, rg^{-1}) \, ,$$
	then there is a natural identification
	$ \mathcal{C}_n \cong \tilde{\mathcal{C}}_n/  \mathrm{GL}_n$ (see \cite{W} for details).
	
	Observe that equation (\ref{vector-covector}) also holds for traceless versions of $X$ and $Y$, the pair of matrices \[(A,B)=(X-\frac1n\tr(X)I_n,\ Y-\frac1n\tr(Y)I_n).\] Multiplying the both sides of the traceless version of (\ref{vector-covector}) by $A^k$ (by $B^k$) for any positive integer $k$, and considering traces one obtains
	\begin{align}
		\tr(A^k)=rA^kc\, \label{wAv}\\
		\tr(B^k)=rB^kc. \label{wBv}
	\end{align}
	
	Squaring both sides of the traceless version of (\ref{vector-covector}), we obtain 
	\[
	[A,B]^2+2[A,B]+I_n=crcr.\] Note that $rc=n$ and this leads to	
	\begin{equation}
		[A,B]^2=I_n+(n-2)cr. \label{vector-covector2}
	\end{equation}
	
	Consider trace in (\ref{vector-covector2}) to get \begin{equation}\label{ABAB-A^2B^2}\tr(ABAB)-\tr(A^2B^2)=\frac12\,n(n-1).
	\end{equation}
	
	Multiplying the traceless version of equality (\ref{vector-covector}) from the left by $AB$ (and $BA$) and considering traces, using (\ref{ABAB-A^2B^2}) one obtains
	\begin{align} \nonumber
		rABc=\tr(AB)+\frac12\, n(n-1)\\
		\label{wBAv}
		rBAc=\tr(AB)-\frac12\, n(n-1)
	\end{align}

	The following identities for traceless pairs of matrices $(A,B)\in \mathcal{C}_n$ are established in \cite[Proposition 2.2, 2.3]{N}:
	\begin{align}
		\tr(A^3B^2)&=\tr(A^2BAB) \label{A3B2=ABABA}\\
		\tr(A^2BAB^2) &= \tr(A^2B^2AB)-\frac{n(n-1)(n-2)}3  \label{A2BAB2-A2B2AB}\\
		\tr((AB)^3)& = \tr(A^2B^2AB)+(n-1)\tr(AB)-\frac{n(n-1)(n-2)}6 \label{(AB)^3}\\
		\tr(A^3B^3) &= \tr(A^2B^2AB)-(n-2)\tr(AB)-\frac{ n(n-1)(n-2)}6 \label{A3B3}
	\end{align}
	
	\begin{remark}\label{remark_9_gen} One can verify that 
		\[v=-\frac{1}{2}\tr([A,B]^2), \ w=\frac{1}{3}\tr([A,B]^3).\]
		Then equalities (\ref{ABAB-A^2B^2}) and (\ref{A2BAB2-A2B2AB}) show that the generators $v$ and $w$  introduced in (\ref{v})-(\ref{w}) on $\mathcal{C}_n$ are constants. For $n=3$, we get $v=-3$ and $w=2$. Thus, a minimal generating set of $\mathbb{C}[\mathcal{C}_3]$ admits at most nine elements. 
	\end{remark}
	
	To find the generators and defining relations of $\mathcal{C}_3$ we often use the Cayley-Hamilton theorem
	\begin{align} \label{tracelessCayleyHamilton}
		M^3& =\frac{1}{2}\tr(M^2)M+\frac13 \tr(M^3) I_3
	\end{align}
	for a traceless matrix $M\in \mathcal{M}_3$. Below we list some of the necessary traces in terms of notations (\ref{generators}).
	
	\begin{lemma}\label{20-lemma}
		For any traceless matrices $A,B\in \mathcal{M}_3$ we have
		\[
		\begin{split}
			&\tr(A^3B) =\frac12a_3a_4, \ \tr(AB^3) =\frac12a_5a_4, \\
			&\tr(A^3B^2)= \frac{1}{2}a_3a_8+\frac13 a_5a_6, \ \tr(A^2B^3)  = \frac{1}{2}a_5a_7+\frac13 a_3a_9,\\
			&\tr(A^3B^3) = \frac14a_3a_4a_5+\frac13 a_6a_9.
		\end{split}
		\]
		Furthermore, for traceless pairs of matrices $(A,B)\in \mathcal{C}_3$ we have
		\[
		\begin{split}
			&\tr(A^2B^2)=\frac16a_3a_5+\frac13a_4^2-1,  \\
			&\tr(A^4B^2)=\frac{1}{12}a_3^2a_5+\frac16a_3a_4^2-\frac12a_3 +\frac13a_6a_8.
		\end{split}
		\]
	\end{lemma}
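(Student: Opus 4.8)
The plan is to prove each identity by combining the traceless Cayley--Hamilton theorem \eqref{tracelessCayleyHamilton} with the earlier relations established for pairs on $\mathcal{C}_n$. The identities split into two groups: the first five are \emph{general} trace formulas valid for \emph{any} traceless $A,B\in\mathcal{M}_3$, while the last two require the Calogero--Moser condition through \eqref{ABAB-A^2B^2}. For the general identities, the key tool is that multiplying \eqref{tracelessCayleyHamilton} by a matrix word and taking traces reduces any high-degree trace to the generators in \eqref{generators}. For instance, to get $\tr(A^3B)$ I would take $M=A$ in \eqref{tracelessCayleyHamilton}, multiply on the right by $B$, and take traces: since $\tr(A^2B)\cdot\tfrac12\tr(A^2)$ does not directly appear, I instead use $\tr(A^3B)=\tfrac12\tr(A^2)\tr(AB)+\tfrac13\tr(A^3)\tr(B)$, and $\tr(B)=0$ kills the last term, leaving $\tfrac12 a_3 a_4$. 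The formula for $\tr(AB^3)$ is symmetric under $A\leftrightarrow B$.

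For $\tr(A^3B^2)$ I would again substitute $M=A$ into \eqref{tracelessCayleyHamilton}, this time multiplying on the right by $B^2$ and taking traces, yielding $\tr(A^3B^2)=\tfrac12 a_3\tr(AB^2)+\tfrac13 a_6\tr(B^2)=\tfrac12 a_3 a_8+\tfrac13 a_5 a_6$, since $\tr(AB^2)=a_8$ and $\tr(B^2)=a_5$. The companion $\tr(A^2B^3)$ follows by the same substitution with $M=B$, multiplying by $A^2$. For $\tr(A^3B^3)$ I would take $M=A$, multiply on the right by $B^3$, and take traces to obtain $\tr(A^3B^3)=\tfrac12 a_3\tr(AB^3)+\tfrac13 a_6\tr(B^3)$; then substituting the already-derived value $\tr(AB^3)=\tfrac12 a_4 a_5$ and $\tr(B^3)=a_9$ gives $\tfrac14 a_3a_4a_5+\tfrac13 a_6a_9$. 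Note this last computation is \emph{bootstrapped} — it reuses an earlier part of the lemma — so the order of derivation matters.

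For the final two identities, which hold only on $\mathcal{C}_3$, I would first treat $\tr(A^2B^2)$: here the Cayley--Hamilton reduction alone is insufficient because $\tr(A^2B^2)$ is a genuine generator-level expression, so I would invoke \eqref{ABAB-A^2B^2} with $n=3$, which gives $\tr(ABAB)-\tr(A^2B^2)=3$. I would then need a second, purely general relation expressing $\tr(A^2B^2)+\tr(ABAB)$ (or equivalently $\tr((AB+BA)^2)$ type combination) in terms of the generators; applying \eqref{tracelessCayleyHamilton} and polarization identities to symmetric products should produce $\tr(A^2B^2)+\tr(ABAB)=\tfrac13 a_3a_5+\tfrac23 a_4^2$, and solving the two linear equations yields $\tr(A^2B^2)=\tfrac16 a_3a_5+\tfrac13 a_4^2-1$.

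Finally, for $\tr(A^4B^2)$ I would reduce $A^4=A\cdot A^3$ using \eqref{tracelessCayleyHamilton} so that $A^4=\tfrac12 a_3 A^2+\tfrac13 a_6 A$, multiply by $B^2$ and take traces: $\tr(A^4B^2)=\tfrac12 a_3\tr(A^2B^2)+\tfrac13 a_6\tr(AB^2)=\tfrac12 a_3\tr(A^2B^2)+\tfrac13 a_6 a_8$. Substituting the $\mathcal{C}_3$-value of $\tr(A^2B^2)$ just obtained gives $\tfrac12 a_3\!\left(\tfrac16 a_3a_5+\tfrac13 a_4^2-1\right)+\tfrac13 a_6a_8=\tfrac{1}{12}a_3^2a_5+\tfrac16 a_3a_4^2-\tfrac12 a_3+\tfrac13 a_6a_8$, as claimed. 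I expect the main obstacle to be the $\tr(A^2B^2)$ computation: unlike the others, it is not a single Cayley--Hamilton reduction but requires identifying the correct \emph{general} symmetric-combination identity and combining it with the Calogero--Moser relation \eqref{ABAB-A^2B^2}, so care is needed to verify the coefficient $\tfrac13 a_3a_5+\tfrac23 a_4^2$ for $\tr(A^2B^2)+\tr(ABAB)$ via a clean polarization argument rather than brute-force entry computation.
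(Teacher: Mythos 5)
Your derivations of the five general identities and your reduction of $\tr(A^4B^2)$ are correct, and they follow the same route as the paper (reduce everything via the traceless Cayley--Hamilton identity \eqref{tracelessCayleyHamilton} and take traces). However, the key step in your $\tr(A^2B^2)$ computation contains a genuine error: the ``general symmetric-combination identity'' you propose, namely $\tr(A^2B^2)+\tr(ABAB)=\tfrac13 a_3a_5+\tfrac23 a_4^2$, is false for traceless $3\times 3$ matrices. Writing $E_{ij}$ for the matrix units, take $A=\mathrm{diag}(1,-1,0)$ and $B=E_{12}+E_{21}$: then $\tr(A^2B^2)=2$, $\tr(ABAB)=-2$, $a_3=a_5=2$, $a_4=0$, so your left-hand side is $0$ while your right-hand side is $\tfrac43$. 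The claim is also internally inconsistent: solving the two equations exactly as you state them gives $\tr(A^2B^2)=\tfrac16a_3a_5+\tfrac13a_4^2-\tfrac32$, not the $-1$ you report, so your final formula does not follow from your own system.

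The correct general identity, which is in substance what the paper invokes through \cite[Lemma 1.1]{ADS} and Remark \ref{remark_9_gen}, carries a coefficient $2$ on $\tr(A^2B^2)$. Polarize \eqref{tracelessCayleyHamilton}: the coefficient of $t$ in $(A+tB)^3=\tfrac12\tr((A+tB)^2)(A+tB)+\tfrac13\tr((A+tB)^3)I_3$ gives
\begin{equation*}
A^2B+ABA+BA^2=\tr(AB)\,A+\tfrac12\tr(A^2)\,B+\tr(A^2B)\,I_3 ,
\end{equation*}
and multiplying on the right by $B$ and taking traces (using $\tr(BA^2B)=\tr(A^2B^2)$ and $\tr(B)=0$) yields
\begin{equation*}
2\tr(A^2B^2)+\tr(ABAB)=a_4^2+\tfrac12\, a_3a_5 .
\end{equation*}
Combining this with \eqref{ABAB-A^2B^2} for $n=3$, i.e.\ $\tr(ABAB)-\tr(A^2B^2)=3$, gives $3\tr(A^2B^2)=a_4^2+\tfrac12a_3a_5-3$, which is exactly the stated $\tr(A^2B^2)=\tfrac16a_3a_5+\tfrac13a_4^2-1$. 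With this correction, your subsequent step $\tr(A^4B^2)=\tfrac12a_3\tr(A^2B^2)+\tfrac13a_6a_8$ is valid and produces the lemma's last formula; everything else in your proposal stands.
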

	
	\begin{proof}
		The first part of the statement follows from (\ref{tracelessCayleyHamilton}) for $M=A$ (or $M=B$)  after directly considering traces, or multiplying both sides of (\ref{tracelessCayleyHamilton}) to corresponding matrices and then considering traces. Using Remark \ref{remark_9_gen} and \cite[Lemma 1.1]{ADS}, with similar technique as for the first part one can establish the other identities.
	\end{proof}


	Let us provide the following presentation given in \cite[Theorem 1.2]{ADS} of the algebra of invariants. We denote by $\mid A \mid$ the determinant of a matrix $A$.
	
	\begin{theorem}[\cite{ADS}] The algebra of invariants of two $3\times 3$ matrices is generated by (\ref{C32gen1}) and $v, w$ subject to the defining relation
		\begin{equation}\label{old_Drensky}
			w^2-\frac1{27}w_1+\frac29w_2-\frac4{15}w_3'-\frac1{90}w_3''-\frac13w_4+\frac23w_5+\frac13w_6+\frac4{27}w_7=0,
		\end{equation}
		where \[
		\begin{split}
			u&=\begin{vmatrix}
				a_3&a_4\\
				a_4&a_5
			\end{vmatrix}
			, \ w_1=u^3, \ w_2=u^2v, \ w_4=uv^2, \ w_7=v^3,\\
			w_5&=v \cdot \begin{vmatrix}
				a_3&a_4&a_5\\
				a_6&a_7&a_8\\
				a_7&a_8&a_9
			\end{vmatrix}, \ w_3'=u\ \cdot \begin{vmatrix}
				a_3&a_4&a_5\\
				a_6&a_7&a_8\\
				a_7&a_8&a_9
			\end{vmatrix}\\ 
			w_6&=\begin{vmatrix} a_6&a_8\\ a_7&a_9\end{vmatrix}^2-4\begin{vmatrix} a_6&a_8\\a_8&a_7 \end{vmatrix}\cdot \begin{vmatrix} a_6&a_7\\ a_7&a_8 \end{vmatrix} \\ 
			w_3''&=5(a_5^3a_6^2+a_3^3a_9^2)-30(a_5^2a_4a_7a_9 +a_3^2a_4a_8a_9)-2(a_4^3+3a_3a_4a_5)(9a_7a_8+a_6a_9)\\ 
			&+3(4a_5a_4^2+a_5^2a_3)(3a_7^2 +2a_8a_6)+3(4a_4^2a_3+a_3^2a_5)(3a_8^2+2a_7
			7a_9).\\
		\end{split}
		\]
	\end{theorem}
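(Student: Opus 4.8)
The plan is to present the invariant ring $R:=\mathbb{C}[\mathcal{M}_3\times\mathcal{M}_3]^{\mathrm{GL}_3}$ as a quadratic extension of a polynomial ring in ten variables, with $w$ singled out as the unique anti-invariant of the transpose involution; the defining relation is then forced to have the shape $w^2=f$, and the substance of the theorem is the explicit value of $f$.

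First I would settle generation. By Teranishi's theorem $R$ is generated by $\tr(X),\tr(Y)$ together with the traces listed in (\ref{C32gen1}) and (\ref{C32gen2}). Replacing $X,Y$ by the traceless matrices $A,B$ is a triangular, hence invertible, change of generators, so $R$ is generated by $\tr(X),\tr(Y)$ and the traceless analogues of (\ref{C32gen1})--(\ref{C32gen2}). Finally, the Cayley--Hamilton reductions (\ref{tracelessCayleyHamilton}) together with Lemma \ref{20-lemma} express $\tr(ABAB)$ and $\tr(A^2BAB^2)$ as polynomials in $a_3,\dots,a_9$ (the traceless versions of (\ref{C32gen1})); by the definitions (\ref{v})--(\ref{w}) of $v$ and $w$ this shows that $\{\tr(A^2B^2),\tr(A^2B^2AB)\}$ and $\{v,w\}$ generate the same extension, whence $R=\mathbb{C}[a_1,\dots,a_9,v,w]$.

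Next I would exploit the involution $\sigma\colon(X,Y)\mapsto(X^{\mathsf T},Y^{\mathsf T})$. Since $\tr(W)=\tr(W^{\mathsf T})$ turns the trace of a word into the trace of its reversal, each $a_i$ and $v$ is $\sigma$-fixed, whereas reversal interchanges the two terms of $w$ in (\ref{w}), so $\sigma(w)=-w$. As $R$ is a domain and $w\neq0$, writing a general element as $p+qw$ with $p,q\in\mathbb{C}[a_1,\dots,a_9,v]$ and comparing with its $\sigma$-image gives $R^{\sigma}=\mathbb{C}[a_1,\dots,a_9,v]$; in particular $w^2\in\mathbb{C}[a_1,\dots,a_9,v]$, so there is a relation of the form $w^2=f(a_1,\dots,a_9,v)$. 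That this is the \emph{only} relation I would get from the dimension count $\dim R=18-(\dim\mathrm{GL}_3-1)=10$ (the generic conjugation stabilizer being the scalars): since $w$ is algebraic over $\mathbb{C}(a_1,\dots,a_9,v)$ and $\dim R=10$, the ten elements $a_1,\dots,a_9,v$ must be algebraically independent, so $R$ is free of rank $2$ over the polynomial ring they generate with basis $\{1,w\}$, and the kernel of $\mathbb{C}[A_1,\dots,A_9,V,W]\twoheadrightarrow R$ is exactly $(W^2-f)$.

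It remains to compute $f$ explicitly, which is the heart of the matter. Here I would use Remark \ref{remark_9_gen}: for traceless $A,B$ the commutator $C=[A,B]$ is traceless with $\tr(C^2)=-2v$ and $w=\tr(C^3)/3=\det(C)$, so $C$ has characteristic polynomial $\lambda^3+v\lambda-w$ and discriminant $-4v^3-27w^2$. Thus $w^2=\det(C)^2$ is a symmetric function of the eigenvalues of $C$, hence a polynomial in the power sums $\tr(C^k)$; expanding these and reducing every resulting trace of a word in $A,B$ by (\ref{tracelessCayleyHamilton}) and Lemma \ref{20-lemma} produces $f$ as a polynomial in $a_3,\dots,a_9,v$. \textbf{The main obstacle is exactly this reduction}: it is a lengthy degree-$12$ trace computation, and the real work is organizing it so that the answer collapses into the compact combination of $2\times2$ and $3\times3$ minors displayed in (\ref{old_Drensky}).
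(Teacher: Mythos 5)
The paper does not prove this statement at all: it is quoted verbatim from \cite{ADS} (Theorem 1.2 there), so your proposal has to stand on its own as a proof of the explicit identity \eqref{old_Drensky}. It does not, and the gap is at the decisive point. The content of the theorem \emph{is} the explicit formula --- the precise combination of minors $w_1,w_2,w_3',w_3'',w_4,\dots,w_7$ with those coefficients --- and your argument stops exactly where that formula would have to be produced: you argue (at best) that there exists \emph{some} relation of the form $w^2=f(a_1,\dots,a_9,v)$, and then describe, without performing, the degree-$12$ trace reduction that would identify $f$. A proof that ends with ``the real work is organizing this computation'' establishes only that $\mathbb{C}[\mathcal{M}_3\times\mathcal{M}_3]^{\GL_3}$ is a hypersurface $\mathbb{C}[a_1,\dots,a_9,v][w]/(w^2-f)$; it does not prove the stated theorem, whose whole point is the value of $f$.

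There are also two genuine defects in the structural part. First, the claim that \eqref{tracelessCayleyHamilton} and Lemma \ref{20-lemma} express $\tr(ABAB)$ and $\tr(A^2BAB^2)$ as polynomials in $a_3,\dots,a_9$ alone is false: taking the coefficient of $t$ in \eqref{tracelessCayleyHamilton} for $M=A+tB$ gives $A^2B+ABA+BA^2=\tr(AB)A+\tfrac12\tr(A^2)B+\tr(A^2B)I_3$, and multiplying by $B$ and tracing yields $\tr(ABAB)+2\tr(A^2B^2)=a_4^2+\tfrac12a_3a_5$, so $\tr(ABAB)$ genuinely involves $\tr(A^2B^2)$; the change of generators still goes through with the corrected identities, but not for the reason you give. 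Second, and more seriously, your derivation of $R^{\sigma}=\mathbb{C}[a_1,\dots,a_9,v]$ is circular: writing an arbitrary element of $R$ as $p+qw$ with $p,q\in\mathbb{C}[a_1,\dots,a_9,v]$ presupposes that $\{1,w\}$ spans $R$ as a module over that subring, i.e.\ that $w^2$ already lies in $\mathbb{C}[a_1,\dots,a_9,v]$ --- precisely what you are trying to prove. The dimension count does not repair this, since it assumes $w$ is algebraic over $\mathbb{C}(a_1,\dots,a_9,v)$; transcendence-degree considerations alone leave open the alternative that $a_1,\dots,a_9,v$ are algebraically dependent while $w$ is transcendental over them, and even granting independence they would not exclude $[\mathbb{C}(a_1,\dots,a_9,v)(w):\mathbb{C}(a_1,\dots,a_9,v)]>2$. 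Both holes can be closed by citing Teranishi \cite{T}, who proved that the invariant ring is a free module of rank two over a polynomial subring in ten of the generators with basis $\{1,\tr(X^2Y^2XY)\}$; with that in hand, your transpose-involution observation $\sigma(w)=-w$ (which is correct, and is an elegant way to see the relation must be even in $w$) does force $w^2=f\in\mathbb{C}[a_1,\dots,a_9,v]$. But even then the theorem itself --- the formula for $f$ --- remains uncomputed.
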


	In what follows we make use of the following group and its action on pairs of matrices. 
	
	\begin{definition}\label{Cremona}
		Let us denote by $G$ the group generated by unimodular automorphisms  of $\mathbb{C}[x,y]$:
		\begin{enumerate}
			\item[(i)] $\Phi_p(x)=x+p(y)$ and $\Phi_p(y)=y$, where $p\in  \mathbb{C}[x]$,
			\item[(ii)] $\Psi_q(x)=x$ and $\Psi_q(y)=y+q(x)$, where $q \in \mathbb{C}[x]$.
		\end{enumerate}
	\end{definition} 
	The action of this groups on $(\mathcal{M}_n\times \mathcal{M}_n) /\!/ \GL_n$ is defined similarly as follows \cite{BW}:
	\begin{enumerate}
		\item[(i)] $\Phi_p:(X,Y)\mapsto (X+p(Y),Y)$, where $p\in  \mathbb{C}[x]$,
		\item[(ii)] $\Psi_q:(X,Y)\mapsto (X,Y+q(X))$, where $q \in \mathbb{C}[x]$.
	\end{enumerate}
	Note that the action of $G$ commutes with the $\GL_n$-action on pairs of matrices. There is a natural decomposition of $\GL_3$-sets:
	\begin{equation}    \label{decomposition}
		(\mathcal{M}_3\times \mathcal{M}_3) /\!/\GL_3=
		\mathcal{C}_3 \cup \mathcal{C}'_3 \cup \mathcal{C}''_3,
	\end{equation}
	where  
	\[
	\begin{split}
		\mathcal{C}'_3=\{(X,Y)\in M_3\times M_3 \mid \rank([X,Y]+I_3)=2\}/\negthickspace /{\GL_3}\\
		\mathcal{C}''_3=\{(X,Y)\in M_3\times M_3 \mid \rank([X,Y]+I_3)=3\}/\negthickspace /{\GL_3}
	\end{split} \ .
	\] 
	Note that $\rank([X,Y]+I)$ is never zero. Since $[X+p(Y),Y]=[X,Y]$, the action  of $G$ preserves commutators and the decomposition (\ref{decomposition}) is a $G$-invariant decomposition.  Now for a given pair $(X,Y)\in (\mathcal{M}_{3}\times \mathcal{M}_{3})  / \negthickspace  / \GL_3$ the action of $G$ induces a group action on the variety of tuples $(a_1,\dots,a_9,v,w)$ that satisfy (\ref{old_Drensky}). By Remark \ref{remark_9_gen}, it follows that the action of $G$ keeps $v$ and $w$ invariant. Hence, while considering the action of $G$ on the coordinate ring, we can restrict ourselves on the first $9$ entries. 
	
	One of the main ingredients of the proof of our main result is the following fact \cite[Theorem 1.3]{BW}  by Berest and Wilson.
	\begin{theorem}[\cite{BW}] The action of $G$ on each of the spaces $\mathcal{C}_n$ is transitive. 
	\end{theorem}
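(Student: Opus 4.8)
The plan is to reduce the statement to showing that every point of $\mathcal{C}_n$ can be moved by some element of $G$ to a single fixed base point, and to carry this out first on the dense locus where one of the matrices is regular semisimple, reading off the action of the generators $\Phi_p$ and $\Psi_q$ explicitly. The two structural facts I would lean on are already available: $\mathcal{C}_n$ is smooth and irreducible of dimension $2n$ (Wilson), and the $G$-action is by morphisms commuting with the $\GL_n$-quotient, so it descends cleanly to the coordinate ring and preserves the stratification. Because of irreducibility, it will be enough to produce one $G$-orbit that is dense and then argue that the remaining (lower-dimensional, $G$-stable) strata cannot persist.

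For the generic computation I would represent a point by $(X,Y,c,r)$ with $[X,Y]+I_n=cr$ and restrict to the open set where $Y$ is regular semisimple. Diagonalizing $Y=\mathrm{diag}(y_1,\dots,y_n)$ with distinct $y_i$ and using the residual diagonal gauge, the rank-one relation forces $c_ir_i=1$ and $X_{ij}=c_ir_j/(y_j-y_i)$ for $i\ne j$, so after normalizing $c_i=r_i=1$ the point is determined by the positions $y_i$ and the diagonal momenta $X_{ii}$, the data of a rational Calogero–Moser system. In this frame $\Phi_p\colon X\mapsto X+p(Y)$ adds the diagonal matrix $p(Y)$, so by Lagrange interpolation through the distinct $y_i$ it can set $X_{11},\dots,X_{nn}$ to arbitrary prescribed values, while $\Psi_q\colon Y\mapsto Y+q(X)$ deforms the spectrum of $Y$ and generates the position flow (for $q(x)=x$ one recovers $\dot y_i=X_{ii}$). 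Combining momentum interpolation with the transitive Calogero–Moser position flows on the collision-free configuration space then identifies the whole regular-semisimple locus with a single $G$-orbit.

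The hard part will be upgrading this from the generic locus to all of $\mathcal{C}_n$: at points where neither $X$ nor $Y$ is regular semisimple the normal form above degenerates, and one must either run a closure argument (the dense orbit is open by smoothness together with constructibility of orbits, and its $G$-stable complement, having smaller dimension, is forced to be empty by irreducibility — which requires some care since $G$ is an ind-group and one must justify that its orbits are locally closed) or, more robustly, abandon the matrix model and identify $\mathcal{C}_n$ with the isomorphism classes of rank-one right ideals of the first Weyl algebra $A_1$ and $G$ with $\mathrm{Aut}(A_1)$, where transitivity on ideal classes of a fixed numerical invariant is proved uniformly through Darboux transformations and the bispectral involution. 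I expect precisely this passage — proving that the two families of flows generated by $\Phi_p$ and $\Psi_q$ together reach every point of $\mathcal{C}_n$, and not merely the generic ones — to be the crux, since the bispectral symmetry that makes both families effective is not visible from the naive interchange of $X$ and $Y$ (indeed $[Y,X]+I_n=2I_n-cr$ has full rank for $n\ne 2$).
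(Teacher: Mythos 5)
First, a calibration: the paper does not prove this statement at all --- it is imported from Berest--Wilson \cite{BW} and used as a black box in the proof of Theorem \ref{main} --- so your proposal must be measured against the proof in \cite{BW} itself, which runs through the identification of $\bigsqcup_n\mathcal{C}_n$ with the ideal classes of the Weyl algebra $A_1$, the adelic Grassmannian, and the bispectral involution. That is exactly the machinery you name in your final paragraph as the ``more robust'' alternative, but you do not execute any of it; invoking it amounts to citing the theorem you are supposed to prove. So everything hinges on your other route, the closure argument, and that route has a genuine logical hole.

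The hole is this: you claim that the $G$-stable complement of the dense orbit, ``having smaller dimension, is forced to be empty by irreducibility.'' That inference is false in general --- a proper closed $G$-stable subset of an irreducible variety need not be empty; it can simply be a union of smaller orbits. The paper itself supplies a counterexample to your exact pattern of reasoning, in this exact setting: $\mathcal{C}_{3,0}$ (the quotient of the commuting variety) is irreducible, carries the same $G$-action, has finitely many orbits and hence a dense one, and yet by Theorem \ref{main2} it has exactly three orbits. What a correct argument of this shape requires is openness of \emph{every} orbit (then connectedness of $\mathcal{C}_n$ finishes), i.e.\ surjectivity of the infinitesimal action at \emph{every} point; and it is precisely at the degenerate points, where your normal form $X_{ij}=c_ir_j/(y_j-y_i)$ collapses, that you offer no argument --- this is the actual content of the Berest--Wilson theorem, not a technicality about ind-groups. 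Two smaller defects: even on the regular-semisimple locus your claim is heuristic, because the one-parameter subgroups $\Psi_{tq}$ move the eigenvalues of $Y$ by the full interacting Calogero--Moser dynamics, with positions and momenta coupled, so ``momentum interpolation plus position flows'' still needs a controllability argument rather than a naming of flows; and your closing remark about the asymmetry of $X$ and $Y$ is off the mark, since the signed swap $(X,Y)\mapsto(Y,-X)$ --- the map $\Theta_M$ with $M=\left[\begin{smallmatrix}0&1\\-1&0\end{smallmatrix}\right]$ in the paper's notation --- lies in $G$, preserves $\mathcal{C}_n$, and conjugates the family $\Phi_p$ into the family $\Psi_q$.
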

	
	
	\section{Relations in $\mathcal{C}_3$} \label{relations_C3}

	\begin{proposition}
		For traceless pairs of matrices $(A,B)\in \mathcal{C}_3$ we have
		\begin{align} \label{A^4B^2}
			\tr(A^4B^2)& =\tr(A^3BAB)-\frac{3}{2}\tr(A^2) \\ \label{(A^2B)^2}
			\tr(A^3BAB)& =\tr((A^2B)^2)+\frac12\tr(A^2)
		\end{align}
	\end{proposition}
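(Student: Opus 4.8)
The plan is to express the difference of the two sides of each identity as a single trace involving the commutator $C:=[A,B]=AB-BA$, and then to evaluate that trace using the traceless Cayley--Hamilton relation (\ref{tracelessCayleyHamilton}) together with the structural identity
\[
C^2=[A,B]^2=[A,B]+2I_3 ,
\]
valid on $\mathcal{C}_3$. Indeed, for $n=3$ the traceless form of (\ref{vector-covector}) reads $C+I_3=cr$, and substituting this into (\ref{vector-covector2}) gives $C^2=I_3+cr=C+2I_3$.

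For (\ref{A^4B^2}) I would first factor the difference through the commutator,
\[
A^4B^2-A^3BAB=A^3(AB-BA)B=A^3CB ,
\]
so that the claim is equivalent to $\tr(A^3CB)=-\tfrac32 a_3$. Writing $A^3=\tfrac12 a_3 A+\tfrac13 a_6 I_3$ by (\ref{tracelessCayleyHamilton}) reduces this to
\[
\tr(A^3CB)=\tfrac12 a_3\,\tr(ACB)+\tfrac13 a_6\,\tr(CB) .
\]
Here $\tr(CB)=\tr(AB^2)-\tr(BAB)=0$ by cyclicity of the trace, while $\tr(ACB)=\tr(A^2B^2)-\tr(ABAB)=-3$ by (\ref{ABAB-A^2B^2}) with $n=3$. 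Hence $\tr(A^3CB)=-\tfrac32 a_3$, which is precisely (\ref{A^4B^2}).

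For (\ref{(A^2B)^2}) the same strategy gives
\[
A^3BAB-A^2BA^2B=A^2(AB-BA)AB=A^2CAB ,
\]
so it remains to prove $\tr(A^2CAB)=\tfrac12 a_3$. A direct expansion here is circular, so the key step is to split $AB$ into symmetric and antisymmetric parts, $AB=\tfrac12(AB+BA)+\tfrac12 C$. Setting $T:=\tr(A^2CAB)$, this produces the self-referential relation $T=\tfrac12\bigl(T+\tr(A^2CBA)\bigr)+\tfrac12\tr(A^2C^2)$, which solves to $T=\tr(A^2CBA)+\tr(A^2C^2)$. Now $\tr(A^2CBA)=\tr(A^3CB)=-\tfrac32 a_3$ by cyclicity and the computation just made, while $\tr(A^2C^2)=\tr(A^2C)+2a_3=2a_3$, using $C^2=C+2I_3$ and $\tr(A^2C)=\tr(A^3B)-\tr(A^3B)=0$. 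Therefore $T=-\tfrac32 a_3+2a_3=\tfrac12 a_3$, giving (\ref{(A^2B)^2}).

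The only genuine subtlety is in (\ref{(A^2B)^2}): unlike the first identity, a straightforward rewriting leads back to the quantity one is trying to compute, and the resolution is the symmetric/antisymmetric splitting that feeds the already-evaluated trace $\tr(A^3CB)$ back in, combined with the $\mathcal{C}_3$-specific relation $C^2=C+2I_3$. Everything else is routine manipulation of traces via cyclicity and Cayley--Hamilton.
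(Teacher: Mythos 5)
Your proof is correct, but it takes a genuinely different route from the paper's. The paper works inside the quadruple presentation: it multiplies the traceless version of \eqref{vector-covector} on the left and right by suitable matrices ($A^3$ and $B$ for \eqref{A^4B^2}, then $A^2$ and $AB$ for \eqref{(A^2B)^2}), takes traces, and is left with scalar quantities of the form $rMc$; these are then evaluated via the auxiliary identities \eqref{wAv}, \eqref{wBv}, \eqref{wBAv}, the Cayley--Hamilton relation \eqref{tracelessCayleyHamilton}, and Lemma \ref{20-lemma}. You instead eliminate the vectors $c,r$ at the outset by packaging the rank-one condition into the single matrix identity $C^2=C+2I_3$ for $C=[A,B]$ (which does follow from \eqref{vector-covector} and \eqref{vector-covector2} exactly as you say), rewrite each difference of traces as a trace against $C$, and finish with cyclicity, Cayley--Hamilton for $A$, and \eqref{ABAB-A^2B^2}; your evaluations $\tr(A^3CB)=\tfrac12 a_3\tr(ACB)=-\tfrac32 a_3$ and $T=\tr(A^2CBA)+\tr(A^2C^2)=-\tfrac32 a_3+2a_3=\tfrac12 a_3$ all check out. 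What your approach buys is self-containedness: no $rMc$ bookkeeping, and it isolates the only $\mathcal{C}_3$-specific input as the quadratic relation satisfied by the commutator. What the paper's approach buys is economy within its own framework: the $(X,Y,c,r)$ machinery and the identities \eqref{wAv}--\eqref{wBAv} are set up in the preliminaries and reused throughout, e.g.\ in Proposition \ref{main_5eq}. One stylistic remark: your ``self-referential'' step in the second identity is an unnecessarily indirect phrasing of the one-line computation $\tr(A^2CAB)-\tr(A^2CBA)=\tr\bigl(A^2C(AB-BA)\bigr)=\tr(A^2C^2)$, which yields $T=\tr(A^2CBA)+\tr(A^2C^2)$ directly, without the symmetric/antisymmetric splitting.
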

	
	\begin{proof}
		Multiplying the traceless version of  (\ref{vector-covector}) by $A^3$ from the left and by $B$ from the right, one obtains
		\begin{align}\label{Trace A^4B^2}
			\tr(A^4B^2)&=\tr(A^3BAB)-\tr(A^3B)+rBA^3c.
		\end{align}
		Multiplying equation (\ref{tracelessCayleyHamilton}) for $M=A$ from the right by $B$  we have
		\begin{align} \label{BA^3}
			BA^3&=\frac12 \tr(A^2)BA+\frac13\tr(A^3)B.
		\end{align}
		Then multiplying (\ref{BA^3}) from the left by $r$ and from the right by $c$,
		one has
		\begin{equation}\label{wBA^3v}
			rBA^3c =\frac{1}{2}\tr(A^2)rBAc+\frac13 \tr(A^3)rBc.
		\end{equation}
		Similarly, we get
		\begin{equation*}
			rA^3Bc =\frac{1}{2}\tr(A^2)rABc+\frac13 \tr(A^3)rBc.
		\end{equation*}
		Substituting (\ref{wBv}), (\ref{wBAv}),  (\ref{wBA^3v}) into (\ref{Trace A^4B^2}) and using Lemma \ref{20-lemma} yields (\ref{A^4B^2}). 
		Similarly,  multiplying (\ref{vector-covector}) by $A^2$ from the left and by $AB$ from the right and considering traces of both sides, we have
		\begin{align}\label{A3BAB}
			\tr(A^3BAB)=\tr((A^2B)^2)-\tr(A^3B)+rABA^2c.
		\end{align}
		Multiplying (\ref{vector-covector}) by $A^2$ from the right, we obtain
		\begin{equation}\label{ABA^2}
			ABA^2-BA^3+A^2=crA^2.
		\end{equation}
		Then multiplying (\ref{ABA^2}) from the left to $r$ and from the right to $c$, using (\ref{wAv}), (\ref{wBA^3v}), (\ref{wBv}), (\ref{wBAv}) consecutively deduces
		\begin{equation}\label{wABA^2v}
			\begin{split}
				rABA^2c&= rBA^3c+2rA^2c\\
				&=\frac{1}{2}\tr(A^2)rBAc+\frac13 \tr(A^3)rBc+2\tr(A^2)\\
				&=\frac{1}{2}\tr(A^2)(\tr(AB)-3)+2\tr(A^2)\\
				&=\frac{1}{2}\tr(A^2)\tr(AB)+\frac{1}{2}\tr(A^2).
			\end{split}
		\end{equation}

		Substituting  (\ref{wABA^2v}) into (\ref{A3BAB}) and using Lemma \ref{20-lemma} we obtain (\ref{(A^2B)^2}).
	\end{proof}

	\begin{proposition}\label{main_5eq}
		For traceless pairs of matrices $(A,B)\in \mathcal{C}_3$ the equations in (\ref{id1z}) 
		in the notations (\ref{generators}) hold.
	\end{proposition}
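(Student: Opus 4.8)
My plan rests on a single device: evaluating one mixed trace in two different ways. The first, ``generic'', evaluation uses only that $A,B$ are traceless $3\times3$ matrices, via the polarized Cayley--Hamilton identity and Lemma \ref{20-lemma}; the second, ``Calogero--Moser'', evaluation uses the rank-one condition through the relations of \cite{N} recalled in \eqref{A3B2=ABABA}--\eqref{A3B3}. Equating the two will produce exactly one of the relations in \eqref{id1z}. Before starting I observe that the involution $A\leftrightarrow B$ (which fixes $a_4$ and swaps $a_3\leftrightarrow a_5$, $a_6\leftrightarrow a_9$, $a_7\leftrightarrow a_8$) sends the first relation of \eqref{id1z} to the second and the third to the fifth, while fixing the fourth; so it suffices to prove the first, third, and fourth.

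The preliminary step is to record the polarized Cayley--Hamilton identities. Substituting $M=\lambda A+\mu B$ into \eqref{tracelessCayleyHamilton} and comparing the coefficients of $\lambda^2\mu$ and $\lambda\mu^2$ gives the matrix identities
\[
A^2B+ABA+BA^2=a_4A+\tfrac12a_3B+a_7I_3,
\]
\[
AB^2+BAB+B^2A=\tfrac12a_5A+a_4B+a_8I_3,
\]
valid for all traceless $A,B$. These two identities, together with Lemma \ref{20-lemma}, are the only tools needed on the generic side. For the first relation of \eqref{id1z} I multiply the first identity on the right by $B^2$ and take the trace; after cyclic reduction this expresses $2\tr(A^2B^3)+\tr(ABAB^2)$ in terms of $a_3a_9,a_4a_8,a_5a_7$. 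Now $\tr(A^2B^3)$ is given by Lemma \ref{20-lemma}, while the cyclic rearrangement $\tr(ABAB^2)=\tr(AB^2AB)$ combined with the $A\leftrightarrow B$ image of \eqref{A3B2=ABABA} yields $\tr(ABAB^2)=\tr(A^2B^3)$. Substituting both collapses the expression to the first relation of \eqref{id1z}, and the second follows by the involution.

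For the inhomogeneous relations I play the same game with a single longer word. For the third relation I compute $\tr((A^2B)^2)$ twice. Combining \eqref{A^4B^2}--\eqref{(A^2B)^2} gives $\tr((A^2B)^2)=\tr(A^4B^2)+a_3$, which Lemma \ref{20-lemma} turns into an expression in $a_3^2a_5,\,a_3a_4^2,\,a_3,\,a_6a_8$. Independently, using the first polarized identity to rewrite one factor $A^2B$, then reducing the resulting degree-five words with Lemma \ref{20-lemma} and \eqref{A^4B^2}--\eqref{(A^2B)^2}, produces a second expression, now involving $a_7^2$; equating the two gives the third relation, and the fifth is its involution. For the self-symmetric fourth relation I compute $\tr(A^2B^2AB)$ twice: one evaluation comes from Lemma \ref{20-lemma} for $\tr(A^3B^3)$ via \eqref{A3B3}, the other from substituting the second polarized identity into $\tr(A^2BAB^2)$ and then invoking \eqref{A2BAB2-A2B2AB} and \eqref{A3B3}; equating them produces the fourth relation.

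The computations themselves are elementary but bookkeeping-heavy, and this is where the real work lies: the two points requiring care are (i) correctly matching the long trace words up to cyclic permutation, and (ii) invoking the Calogero--Moser-specific identities \eqref{A3B2=ABABA}--\eqref{A3B3} rather than treating the words as if $A$ and $B$ commuted. The conceptual content is confined to the two polarized Cayley--Hamilton identities and to the observation that each relation in \eqref{id1z} is precisely the obstruction to the generic and Calogero--Moser evaluations of one chosen trace agreeing.
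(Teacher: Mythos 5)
Your overall device coincides with the paper's own proof: the paper also proves this proposition by evaluating traces two ways, expanding $\tr((A+tB)^5)$ and $\tr((A+tB)^6)$ directly into mixed trace words and, alternatively, via the consequences $\tr(M^5)=\frac56\tr(M^2)\tr(M^3)$ and $\tr(M^6)=\frac14\tr(M^2)^3+\frac13\tr(M^3)^2$ of \eqref{tracelessCayleyHamilton}, and then reducing the mixed words by Lemma \ref{20-lemma}, the identities \eqref{A3B2=ABABA}--\eqref{A3B3}, and \eqref{A^4B^2}--\eqref{(A^2B)^2}. Your polarization of the matrix identity \eqref{tracelessCayleyHamilton} followed by multiplication by short words and tracing is the same computation in different bookkeeping; I verified that your derivations of the first, third and fourth relations of \eqref{id1z} close up exactly as you claim.

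There is, however, one genuine gap: the involution $A\leftrightarrow B$ you invoke does not act on $\mathcal{C}_3$, because the defining condition $\rank([A,B]+I_3)=1$ is not symmetric in $A$ and $B$. Indeed, writing $[A,B]+I_3=cr$ with $rc=3$, one gets $[B,A]+I_3=2I_3-cr$, whose eigenvalues are $2,2,-1$; hence $(B,A)$ lies in $\mathcal{C}''_3$, never in $\mathcal{C}_3$. As written this invalidates three steps: the deduction of the second relation from the first, of the fifth from the third, and---inside your proof of the first relation---the claim that the ``$A\leftrightarrow B$ image of \eqref{A3B2=ABABA}'' gives $\tr(ABAB^2)=\tr(A^2B^3)$ on $\mathcal{C}_3$. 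All three conclusions are true, and the fix is short: replace the naive swap by $(A,B)\mapsto(B^T,A^T)$. Since $[B^T,A^T]+I_3=([A,B]+I_3)^T$, this map preserves $\mathcal{C}_3$; on trace functions it sends $\tr(w(A,B))$ to the trace of the reversed word evaluated at $(B,A)$, which on the generators $a_3,\dots,a_9$ and on every identity you need is exactly the stated swap, because each of these words is cyclically equivalent to its reversal (e.g.\ applying \eqref{A3B2=ABABA} to the pair $(B^T,A^T)$ yields $\tr(A^2B^3)=\tr(ABAB^2)$, as required). Alternatively, $(A,B)\mapsto(B,-A)$ preserves $\mathcal{C}_3$ and realizes the swap up to signs that are harmless in every relation of \eqref{id1z}. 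With this correction your proof is complete, and it is essentially the paper's argument organized differently.
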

	\begin{proof}
		Consider straight-forward expansions $\tr((A+Bt)^5)=\displaystyle \sum_{i=0}^5 c_i t^i$ and $\tr((A+Bt)^6)=\displaystyle\sum_{i=0}^6 d_i t^i$. We list some of the coefficients:
		\[
		\begin{split} 
			c_2 & = 5(\tr(A^3B^2)+\tr(A^2BAB))\\
			c_3& = 5(\tr(A^2B^3)+\tr(ABAB^2))\\
			d_2&= 6\tr(A^4B^2)+6\tr(A^3BAB)+3\tr((A^2B)^2)\\
			d_3& = 6\tr(A^3B^3)+6\tr(A^2B^2AB)+6\tr(B^2A^2BA)+2\tr((AB)^3)\\
			d_4& = 6\tr(A^2B^4)+6\tr(ABAB^3)+3\tr((AB^2)^2)
		\end{split}
		\]
		
		On the other hand, the equation (\ref{tracelessCayleyHamilton}) for $M=A+tB$ yields
		\[
		\begin{split}
			\tr((A+tB)^5)=& \frac56 \tr((A+tB)^2) \tr((A+tB)^3)\\
			\tr((A+tB)^6)=&\frac14 \tr^3((A+tB)^2)+\frac13\tr((A+tB)^3)\\
		\end{split}
		\]
		and expanding by powers of $t$, we obtain the following different expressions for the same coefficients:
		\begin{equation*}
			\begin{split} 
				c_2 & = \frac56 (a_5a_6+6a_4a_7+3a_3a_8)\\
				c_3& = \frac56(a_3a_9+6a_4a_8+3a_5a_7)\\
				d_2&=  3 a_3a_4^2 +\frac{3}{4} a_3^2a_5 +3 a_7^2+2 a_6 a_8\\
				d_3& = 2 a_4^3+3 a_3  a_4a_5+6 a_7 a_8+\frac23 a_6 a_9\\
				d_4& = \frac{3}{4} a_3 a_5^2+3  a_4^2a_5+3 a_8^2+2 a_7 a_9
			\end{split}
		\end{equation*}
		Comparing expression for $c_2$, Lemma \ref{20-lemma} and  (\ref{A3B2=ABABA}) implies  $a_5a_6-2a_4a_7+a_3a_8=0$. Similarly, by considering expressions for $c_3$ one establishes $a_3a_9-2a_4a_8+a_5a_7=0$.
		
		Now comparing expressions for $d_2$ and applying Lemma \ref{20-lemma}  in (\ref{A^4B^2}) and  (\ref{(A^2B)^2}), we deduce 
		$$9a_3-a_3a_4^2+a_3^2a_5+6a_6a_8-6a_7^2=0.$$
		Similarly, by considering $d_4$ one establishes $9a_5-a_4^2a_5+a_3a_5^2+6a_7a_9-6a_8^2=0$. 
		
		Finally, substituting (\ref{A2BAB2-A2B2AB}), (\ref{(AB)^3}), (\ref{A3B3}) into expressions that are equal to $d_3$ and using Lemma \ref{20-lemma} implies  $3a_6a_9+9a_4+a_3a_4a_5-3a_7a_8-a_4^3=0$.
	\end{proof}

	\section{A new presentation of $\mathbb{C}[\mathcal{M}_3\times \mathcal{M}_3]^{\GL_3}$}\label{new_presentation}
	
	In this section we present the defining relation  (\ref{old_Drensky}) of $\mathbb{C}[\mathcal{M}_3\times \mathcal{M}_3]^{\GL_3}$ with  the same set of minimal generators as known by \cite{ADS} in a different and more simple form.  Let us define the following polynomials:
	\begin{equation}\label{r's}
		\begin{split}
			r_1&=\begin{vmatrix}a_3&a_4\\a_8&a_9\end{vmatrix}-\begin{vmatrix}a_4&a_5\\a_7&a_8\end{vmatrix},  \quad r_2=\begin{vmatrix} a_3&a_4\\a_7&a_8\end{vmatrix}-\begin{vmatrix}a_4&a_5\\a_6&a_7\end{vmatrix}\\
			r_3&=a_3\Big(\begin{vmatrix} a_3&a_4\\ a_4&a_5\end{vmatrix}-3v\Big)+6\cdot \begin{vmatrix} a_6&a_7\\ a_7&a_8 \end{vmatrix} \\  
			r_4&=a_4\Big(\begin{vmatrix} a_3&a_4\\ a_4&a_5\end{vmatrix}-3v\Big)+3\cdot \begin{vmatrix}
				a_6&a_7\\ a_8&a_9 \end{vmatrix} \\
			r_5&=a_5\Big(\begin{vmatrix} a_3&a_4\\ a_4&a_5\end{vmatrix}-3v\Big)+6\cdot \begin{vmatrix}
				a_7&a_8\\ a_8&a_9 \end{vmatrix}
		\end{split}.
	\end{equation}
	These polynomials are motivated by relations (\ref{id1z}) that hold for $\mathbb{C}[\mathcal{C}_3]$. Indeed, if $v=-3$, the polynomials $r_1,\dots, r_5$ are equal to the polynomials in the left-hand side of equalities (\ref{id1z}). 
	
	\begin{theorem}
		The defining relation (\ref{old_Drensky}) coincides with 
		\begin{equation}\label{new_Drensky_rel}
			w^2+\frac{4v^3}{27}-\frac1{27}(r_3r_5-r_4^2)-\frac1{18} (a_3r_1^2-2a_4r_1r_2+a_5r_2^2)=0,
		\end{equation}
		where $r_1,r_2,r_3,r_4,r_5$ are given in (\ref{r's}).    
	\end{theorem}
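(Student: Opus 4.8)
The plan is to view both (\ref{old_Drensky}) and (\ref{new_Drensky_rel}) as polynomials in the single variable $v$ with coefficients in $\mathbb{C}[a_3,\dots,a_9]$, and to compare them power of $v$ by power of $v$. Both carry the same leading term $w^2$, so it suffices to match the remaining, $w$-free parts. In (\ref{old_Drensky}) the dependence on $v$ is already explicit: writing $u=\begin{vmatrix} a_3&a_4\\ a_4&a_5\end{vmatrix}$ and $D=\begin{vmatrix} a_3&a_4&a_5\\ a_6&a_7&a_8\\ a_7&a_8&a_9\end{vmatrix}$, one has $w_1=u^3$, $w_2=u^2v$, $w_3'=uD$, $w_4=uv^2$, $w_5=vD$ and $w_7=v^3$, while $w_3''$ and $w_6$ are $v$-free. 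All the work is therefore to expose the $v$-dependence of (\ref{new_Drensky_rel}), which is concentrated in the single product $r_3r_5-r_4^2$.

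First I would introduce $s=u-3v$ and read off from (\ref{r's}) that $r_3=a_3s+6m_{37}$, $r_4=a_4s+3m_{49}$ and $r_5=a_5s+6m_{79}$, where $m_{37}=a_6a_8-a_7^2$, $m_{49}=a_6a_9-a_7a_8$ and $m_{79}=a_7a_9-a_8^2$. Multiplying out gives
\begin{equation*}
 r_3r_5-r_4^2=u\,s^2+6\bigl(a_3m_{79}-a_4m_{49}+a_5m_{37}\bigr)s+9\bigl(4m_{37}m_{79}-m_{49}^2\bigr),
\end{equation*}
and the decisive observation is that the cofactor expansion of $D$ along its first row is exactly $a_3m_{79}-a_4m_{49}+a_5m_{37}$, so the coefficient of $s$ equals $6D$. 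Setting $R=4m_{37}m_{79}-m_{49}^2$ and substituting $s=u-3v$, the expression $r_3r_5-r_4^2$ becomes $u^3-6u^2v+9uv^2+6Du-18Dv+9R$. Feeding this into (\ref{new_Drensky_rel}) and collecting powers of $v$, the coefficients of $v^3$, $v^2$ and $v^1$ come out as $\tfrac4{27}$, $-\tfrac13u$ and $\tfrac29u^2+\tfrac23D$, which are precisely the coefficients arising from $\tfrac4{27}w_7$, $-\tfrac13w_4$ and $\tfrac29w_2+\tfrac23w_5$ in (\ref{old_Drensky}). Thus the two relations automatically agree in every term containing $v$, and it is exactly the identification $D=a_3m_{79}-a_4m_{49}+a_5m_{37}$ that makes the $v^1$ comparison succeed.

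What remains is the comparison of the $v$-free parts. After cancelling the common $-\tfrac1{27}u^3$, combining the two contributions proportional to $uD$, and clearing denominators, the entire theorem reduces to a single polynomial identity in $a_3,\dots,a_9$, namely
\begin{equation*}
 w_3''+4uD=30\,(w_6+R)+5\bigl(a_3r_1^2-2a_4r_1r_2+a_5r_2^2\bigr),
\end{equation*}
where $r_1=a_3a_9-2a_4a_8+a_5a_7$ and $r_2=a_3a_8-2a_4a_7+a_5a_6$ are the expanded first two entries of (\ref{r's}). This is where the genuine work lies, and it is the main obstacle. Both sides are homogeneous of bidegree $(6,6)$ with respect to the grading in which each $a_i$ carries the $(\deg_A,\deg_B)$-degree of its defining trace monomial, so the identity amounts to a finite comparison of the coefficients of the bidegree-$(6,6)$ monomials. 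I would first simplify the determinantal contribution $w_6+R$, which collapses considerably, and then expand $w_3''$, $uD$ and the quadratic form $a_3r_1^2-2a_4r_1r_2+a_5r_2^2$ and match coefficients term by term; this step is purely mechanical and is readily confirmed with a computer algebra system, the bigrading being what keeps the bookkeeping finite and the only real difficulty being its length.
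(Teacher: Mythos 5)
Your proof is correct, and it is organized quite differently from the paper's. The paper's entire proof is one sentence: substitute the trace polynomials of an arbitrary pair $(X,Y)\in\mathcal{M}_3\times\mathcal{M}_3$ into both left-hand sides of (\ref{old_Drensky}) and (\ref{new_Drensky_rel}) and confirm equality with a computer algebra system --- no decomposition, no intermediate identities. Your route does genuine structural work first, and the algebra checks out: the expansion $r_3r_5-r_4^2=us^2+6Ds+9R$ (with $s=u-3v$, $R=4m_{37}m_{79}-m_{49}^2$), hinging on the cofactor identity $D=a_3m_{79}-a_4m_{49}+a_5m_{37}$, is correct; the $v^3,v^2,v^1$ coefficients $\tfrac4{27}$, $-\tfrac13u$, $\tfrac29u^2+\tfrac23D$ match (\ref{old_Drensky}) exactly; and equating the $v$-free parts and clearing denominators, $\tfrac2{45}uD+\tfrac1{90}w_3''=\tfrac13(w_6+R)+\tfrac1{18}Q$ multiplied by $90$, gives precisely your residual identity $w_3''+4uD=30(w_6+R)+5Q$ with $Q=a_3r_1^2-2a_4r_1r_2+a_5r_2^2$. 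What your approach buys: it explains structurally why every $v$-dependent term agrees, it compares the two relations as polynomials in the abstract generators (the paper's matrix-level check tacitly needs the extra remark that a $w$-free element of the principal ideal generated by a relation monic of degree two in $w$ must vanish identically), and it shrinks the machine computation to a single bidegree-$(6,6)$ identity --- indeed, with $w_6$ read as $m_{49}^2-4m_{37}m_{79}$ one gets $w_6+R=0$, so only $w_3''+4uD=5Q$ remains to expand. One practical caveat: the formulas for $w_6$ and $w_3''$ as printed in this paper contain transcription errors (the determinant $\begin{vmatrix}a_6&a_8\\a_8&a_7\end{vmatrix}$ and the term $a_5^2a_4a_7a_9$ fail to be bihomogeneous of bidegree $(6,6)$), so your concluding mechanical verification must be run against the formulas of \cite{ADS} rather than the display above; your own bigrading observation is exactly the tool that detects and repairs this, whereas the paper's monolithic matrix-entry check sidesteps the issue only by never touching the printed coefficients.
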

	\begin{proof}
		Straight-forward verification with the help of any computer algebra software establishes equality of the left-hand sides of (\ref{old_Drensky}) and (\ref{new_Drensky_rel}) for arbitrary pairs of matrices $(X,Y)\in \mathcal{M}_3\times \mathcal{M}_3$.
	\end{proof}

	\section{Calogero-Moser space $\mathcal{C}_3$ and similar varieties} \label{Calogero-Moser_type}
	
	Since $\mathcal{C}_3$ is embedded in $(\mathcal{M}_3\times \mathcal{M}_3)/\negthickspace /{\GL_3} $,  relation (\ref{new_Drensky_rel}) holds in $\mathbb{C}[\mathcal{C}_3]$. Recall, for Calogero-Moser space by Remark \ref{remark_9_gen} we have $v=-3$ and $w=2$ and furthermore, $r_1=r_2=r_3=r_4=r_5=0$ by Proposition \ref{main_5eq}. This results in relation (\ref{new_Drensky_rel}) becoming a trivial relation in  $\mathbb{C}[\mathcal{C}_3]$. This is not coincidental since turns out there are no other defining relations for $\mathbb{C}[\mathcal{C}_3]$ and the presentation is as follows. 
	\begin{theorem}\label{main}
		The traces in (\ref{generators}) form a minimal generating set of $\mathbb{C}[\mathcal{C}_3]$ with the defining relations (\ref{id1z}). Explicitly, we have 
		$$\mathbb{C}[\mathcal{C}_3]\cong \mathbb{C}[a_1,a_2]\otimes \mathbb{C}[a_3,a_4,a_5,a_6,a_7,a_8,a_9]/I,$$
		where $I$ is generated by (\ref{id1z}).
	\end{theorem}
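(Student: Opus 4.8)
The plan is to prove the isomorphism by combining the generation result from Section~\ref{prelim}, the relations from Proposition~\ref{main_5eq}, and the transitivity theorem of Berest--Wilson. The overall strategy is \emph{not} to compute the full ideal of relations directly, but rather to introduce an auxiliary affine variety cut out by the candidate relations and show it coincides with $\mathcal{C}_3$ as a variety; equality of coordinate rings then follows provided the candidate ideal is already radical and prime of the correct dimension.

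First I would set up the surjection. By the result of Section~\ref{prelim}, the traces in (\ref{generators}) generate $\mathbb{C}[\mathcal{C}_3]$, so there is a surjective $\mathbb{C}$-algebra homomorphism
\[
\pi\colon \mathbb{C}[a_1,a_2]\otimes \mathbb{C}[a_3,\dots,a_9]/I \longrightarrow \mathbb{C}[\mathcal{C}_3],
\]
where $I$ is generated by (\ref{id1z}); this map is well-defined precisely because Proposition~\ref{main_5eq} shows the five generators of $I$ vanish on $\mathcal{C}_3$. The minimality of the generating set is a separate and easier point, handled by a dimension or leading-term argument (none of the nine traces lies in the subalgebra generated by the others). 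The crux is therefore to show $\pi$ is \emph{injective}, equivalently that $I$ is the entire ideal of relations.

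The key idea, flagged already in Section~\ref{transitivity} of the excerpt, is to let $V\subset \mathbb{C}^9$ be the affine variety defined by the relations (\ref{id1z}) (with the two free coordinates $a_1,a_2$ factored off as $\mathbb{C}^2$). I would proceed in three steps. First, observe that the $a_1,a_2$ factor splits off trivially, so the substantive claim concerns the seven-dimensional coordinates $a_3,\dots,a_9$ modulo the five relations. Second, invoke that the affine Cremona group $G$ acts on $V$ (the relations being $G$-stable because they are built from invariants and $G$ commutes with the $\GL_3$-action), and prove this action is \emph{transitive} on $V$ --- this is exactly what Section~\ref{transitivity} is said to establish. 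Third, since Berest--Wilson (\cite[Theorem 1.3]{BW}) give that $G$ acts transitively on $\mathcal{C}_3$, and since $\mathcal{C}_3 \hookrightarrow V$ via the nine traces (a $G$-equivariant embedding landing in $V$ by Proposition~\ref{main_5eq}), a single $G$-orbit argument forces $\mathcal{C}_3$ and $V$ to coincide set-theoretically: both are single $G$-orbits through a common point. Equality of the varieties then upgrades to equality of coordinate rings once one checks $V$ is reduced and irreducible of dimension $2n=6$ matching $\dim \mathcal{C}_3$, so that $\pi$ is a surjection of integral domains of equal dimension and hence an isomorphism.

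The main obstacle I anticipate is establishing transitivity of $G$ on $V$ intrinsically, using only the defining equations (\ref{id1z}) rather than the matrix model --- one must show that from any point of $V$ the explicit one-parameter automorphisms $\Phi_p,\Psi_q$ of Definition~\ref{Cremona} can be used to reach a fixed base point, tracking how they transform the coordinates $a_3,\dots,a_9$ under the induced (polynomial) action. A secondary technical point is confirming that $V$ has the expected dimension and is irreducible and reduced: the five relations (\ref{id1z}) in seven variables would naively suggest a two-dimensional cut, but the relations are not a regular sequence (the first two are syzygetic consequences), so I would verify that exactly three of them are independent to land on dimension $4$ in the $a_3,\dots,a_9$ variables --- wait, this must be reconciled with $\dim\mathcal{C}_3=6$ including the $a_1,a_2$ directions, so the count is dimension $4$ in the seven traceless coordinates plus the two free ones. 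Confirming this dimension, and that the vanishing locus carries no embedded or extra components, is where the transitivity argument does the real work: a transitive action of a connected group on $V$ makes $V$ automatically smooth and irreducible, so once transitivity is in hand the reducedness and irreducibility come for free, and the isomorphism $\pi$ follows.
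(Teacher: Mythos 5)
Your proposal follows essentially the same route as the paper's own proof. The paper defines $\mathcal{D}_3\subset\mathbb{C}^7$ as the zero locus of the relations (\ref{id1z}), notes $\mathcal{C}_3\subseteq\mathbb{C}^2\times\mathcal{D}_3$ by Proposition \ref{main_5eq}, and then proves transitivity of $G$ on $\mathcal{D}_3$ intrinsically (Lemmas \ref{a_7=0} and \ref{000abcdef}), by using the explicit coordinate action of $\Phi_{\alpha x^2}$, $\Psi_{\alpha x^2}$ and the $\SL_2$-moves $\Theta_M$ to drive an arbitrary point of $\mathcal{D}_3$ to the origin; combined with the Berest--Wilson transitivity on $\mathcal{C}_3$, the two sets must coincide. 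Your three steps (surjection from generation, $G$-equivariant embedding into the candidate variety, single-orbit argument) are exactly this, and you correctly identify that the real work is the case analysis showing transitivity from the equations alone.

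The one place where your argument is wrong as stated is precisely the step where you claim the most: that ``once transitivity is in hand the reducedness and irreducibility come for free.'' Transitivity of $G$ on the closed points of $V(I)$ shows that the zero locus of $I$ equals the image of $\mathcal{C}_3$, i.e.\ that $\sqrt{I}$ coincides with the full ideal $J$ of relations among the nine traces; it says nothing about nilpotents in $\mathbb{C}[a_3,\dots,a_9]/I$. Any group acts transitively on the one-point locus $V(x^2)\subset\mathbb{C}^1$, yet $(x^2)$ is not radical: a group action on a set cannot detect non-reduced scheme structure. Since Theorem \ref{main} asserts that $I$ itself --- not $\sqrt{I}$ --- is the defining ideal, closing this gap requires a separate algebraic verification that $I$ is radical (for instance by showing $\mathbb{C}[a_3,\dots,a_9]/I$ is a domain, or by a generic-smoothness plus Serre-criterion argument; note your own dimension count shows the five generators are not a regular sequence, so Cohen--Macaulayness is not automatic). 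In fairness, the paper's proof ends with ``As a result, the statement of Theorem \ref{main} follows'' and never addresses this point either, so your proposal is no less complete than the published argument --- but your appeal to homogeneity of $V$ cannot substitute for that missing verification, and you should not present it as if it could.
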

	
	The proof of the Theorem \ref{main} is given in section \ref{transitivity}. Motivated by the explicit presentation above, let us consider similar spaces.

	\begin{definition}
		Denote by $\mathcal{C}_{3,v}$ the subspace of $\mathbb{C}[\mathcal{M}_3\times \mathcal{M}_3]^{\GL_3}$ whose coordinate ring is 
		$\mathbb{C}[a_1,a_2]\otimes \dfrac{\mathbb{C}[a_3,a_4,a_5,a_6,a_7,a_8,a_9]}{(r_1,r_2,r_3,r_4,r_5)}$.
	\end{definition}
	From (\ref{new_Drensky_rel}) note that $\mathcal{C}_{3,v}$ is defined for each point of the cuspidal curve \[w^2+\frac{4v^3}{27}=0.\] 
	We are interested in cases $v\neq -3$, since $\mathcal{C}_{3,-3}$ is just $\mathcal{C}_3$ and in the next statement we find for which values of $v$ the varieties $\mathcal{C}_{3,v}$ are contained in $\mathcal{C}'$ and $\mathcal{C}''$ of the decomposition (\ref{decomposition}).

	\begin{proposition}  $\mathcal{C}_{3,v} \subset \mathcal{C}'_3$ if and only if $v+w=-1$ with $v\neq -3$.
	\end{proposition}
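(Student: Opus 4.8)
The plan is to characterize membership in $\mathcal{C}'_3$ via the rank condition $\rank([X,Y]+I_3)=2$ and translate this into the language of the traceless generators. The starting point is equation (\ref{vector-covector2}) in its natural generalization: for a traceless pair, squaring the commutator relation links $[A,B]$, its square, and the rank-one correction term. For the genuine Calogero--Moser locus the commutator $N:=[A,B]+I_3$ has rank $1$, which forced $v=-3$ and $w=2$; the analogous computation for $\rank(N)=2$ should pin down the eigenvalue structure of $[A,B]$ and hence the values of $v=-\tfrac12\tr([A,B]^2)$ and $w=\tfrac13\tr([A,B]^3)$.

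First I would recall that $[A,B]$ is traceless, so its three eigenvalues $\lambda_1,\lambda_2,\lambda_3$ satisfy $\lambda_1+\lambda_2+\lambda_3=0$, and by Remark \ref{remark_9_gen} we have
\begin{align*}
v &= -\tfrac12(\lambda_1^2+\lambda_2^2+\lambda_3^2),\\
w &= \tfrac13(\lambda_1^3+\lambda_2^3+\lambda_3^3).
\end{align*}
The rank condition $\rank([X,Y]+I_3)=\rank([A,B]+I_3)=2$ means exactly that $-1$ is an eigenvalue of $[A,B]$ of geometric (hence, generically, algebraic) multiplicity making the correction $[A,B]+I_3$ drop rank by one, i.e. $-1$ is an eigenvalue of $[A,B]$ with multiplicity one. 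Setting $\lambda_1=-1$ and using $\lambda_2+\lambda_3=1$ together with the constraint that the pair lies on the cuspidal curve $w^2+\tfrac{4v^3}{27}=0$, I would solve for the remaining eigenvalues. The cuspidal-curve condition is precisely the discriminant relation forcing $[A,B]$ to have a repeated eigenvalue, so one expects $\lambda_2=\lambda_3=\tfrac12$. Plugging these back gives $v=-\tfrac12(1+\tfrac14+\tfrac14)=-\tfrac34\cdot 2=-\tfrac32$... so I would instead carry the symmetric-function computation carefully: with eigenvalues $(-1,\tfrac12,\tfrac12)$ one computes $v$ and $w$ directly and checks the claimed identity $v+w=-1$.

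The key technical step, and the main obstacle, is showing the equivalence runs in both directions: that $\rank(N)=2$ forces the eigenvalue pattern, and conversely that the eigenvalue pattern (equivalently $v+w=-1$) forces $\rank(N)=2$ rather than $3$. For the forward direction the rank drop gives one eigenvalue equal to $-1$; the cuspidal constraint supplies the repeated-root condition, and solving the resulting polynomial system in $\lambda_2,\lambda_3$ subject to $\lambda_2+\lambda_3=1$ determines $v+w$. For the converse I would argue that $v+w=-1$ (with $v\neq-3$, excluding the Calogero--Moser value) forces $-1$ to be a simple eigenvalue of $[A,B]$: the condition $v+w=-1$ is equivalent to $-1$ being a root of the characteristic polynomial $\det(tI_3-[A,B])=t^3+vt-w$ (note $\det(-I_3-[A,B])=-1-v+... $ up to sign), which makes $\rank([A,B]+I_3)\le 2$, while $v\neq-3$ rules out the rank-one case, leaving rank exactly $2$. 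I would verify the sign conventions by substituting $t=-1$ into $t^3+vt-w$, obtaining $-1-v-w$, so that $-1$ is an eigenvalue iff $v+w=-1$, which matches the claim cleanly. The remaining care is to confirm that on $\mathcal{C}_{3,v}$ the relations $r_1=\dots=r_5=0$ together with the curve equation are genuinely equivalent to the rank stratification, so that the scheme-theoretic $\mathcal{C}_{3,v}$ coincides with the set-theoretic stratum $\mathcal{C}'_3$; this gluing of the invariant-theoretic presentation to the geometric rank condition is where I expect the real work to lie.
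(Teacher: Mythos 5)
Your core argument is essentially the paper's own proof: evaluating the characteristic polynomial $t^3+vt-w$ of the traceless matrix $[A,B]$ at $t=-1$ is the same computation as the paper's expansion of $\det([X,Y]+I_3)$ via the trace formula $\det(M)=\frac16\tr^3(M)-\frac12\tr(M^2)\tr(M)+\frac13\tr(M^3)$, both yielding that the determinant vanishes if and only if $1+v+w=0$, with $v\neq-3$ ruling out the rank-one stratum exactly as in the paper. The detours in your write-up --- trying to force the eigenvalue pattern $\left(-1,\tfrac12,\tfrac12\right)$ from the cuspidal curve, and the closing worry about gluing the presentation $(r_1,\dots,r_5)$ to the rank stratification --- are unnecessary: the relations $r_i$ play no role here, since the proof needs only that $v$ and $w$ are constant on $\mathcal{C}_{3,v}$ together with the determinant identity.
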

	\begin{proof} We can characterize $\mathcal{C}'_3$ as
		\[ \{(X,Y) \mid 
		\det([X,Y]+I_3)=0, \ v\neq -3\}.
		\]
		Apply well-known formula 
		$$ \det(M)=\frac16 \tr^3(M)-\frac12\tr(M^2)\tr(M)+\frac13\tr(M^3)
		$$
		for $M=[X,Y]+I_3$. After simplifications we obtain that $\mathcal{C}'_3$ is defined by
		$$0=1-\frac12\tr([X,Y]^2)+\frac13\tr([X,Y]^3)
		$$
		and using $[X,Y]=[A,B]$, it simplifies to $1+v+w=0$. 
	\end{proof}

	We establish that these varieties are isomorphic to either the Calogero-Moser space $\mathcal{C}_3$ or to the variety of commuting $3\times 3$ matrices.

	\begin{theorem}
		For all $v\neq 0$ the varieties $\mathcal{C}_{3,v}$ and $\mathcal{C}_3$ are isomorphic.
	\end{theorem}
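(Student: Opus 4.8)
The plan is to realize the isomorphism concretely as a rescaling automorphism of $\mathbb{C}^7=\mathrm{Spec}\,\mathbb{C}[a_3,\dots,a_9]$ that moves the parameter $v$. First I would record that the five generators of the ideal cutting out $\mathcal{C}_{3,v}$ depend on $v$ only through the term $-3v$ appearing inside $r_3,r_4,r_5$, while $r_1,r_2$ are independent of $v$; unwinding the $2\times 2$ determinants in (\ref{r's}) shows that the specialization $v=-3$ reproduces the relations (\ref{id1z}) verbatim, so $\mathcal{C}_{3,-3}=\mathcal{C}_3$ by Theorem \ref{main}. It therefore suffices to produce, for each $v\neq 0$, an isomorphism $\mathcal{C}_{3,v}\cong\mathcal{C}_{3,-3}$.

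The key idea is to use the grading coming from the substitution $A\mapsto tA,\ B\mapsto tB$: the degree-$2$ traces $a_3,a_4,a_5$ scale by $t^2$, the degree-$3$ traces $a_6,a_7,a_8,a_9$ scale by $t^3$, and, since $v=-\tfrac12\tr([A,B]^2)$, the parameter $v$ behaves as if it carried weight $4$. I would make this precise by introducing, for $t\in\mathbb{C}^*$, the algebra automorphism $\sigma_t$ of $\mathbb{C}[a_3,\dots,a_9]$ with $\sigma_t(a_i)=t^{\deg a_i}a_i$, and then checking by direct substitution that
\begin{equation*}
\sigma_t\bigl(r_j^{(v)}\bigr)=t^{5}\,r_j^{(t^{-4}v)}\quad(j=1,2),\qquad \sigma_t\bigl(r_j^{(v)}\bigr)=t^{6}\,r_j^{(t^{-4}v)}\quad(j=3,4,5),
\end{equation*}
where $r_j^{(v)}$ denotes the $j$-th relation of (\ref{r's}) with parameter $v$. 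Each $r_j^{(v)}$ is thus quasi-homogeneous once $v$ is assigned weight $4$, and the only effect of $\sigma_t$ on the defining ideal is to replace $v$ by $t^{-4}v$, up to the units $t^5,t^6$. Consequently $\sigma_t$ carries the ideal of $\mathcal{C}_{3,v}$ isomorphically onto that of $\mathcal{C}_{3,t^{-4}v}$ and descends to a ring isomorphism $\mathbb{C}[\mathcal{C}_{3,v}]\cong\mathbb{C}[\mathcal{C}_{3,t^{-4}v}]$, the common tensor factor $\mathbb{C}[a_1,a_2]$ being untouched.

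To finish I would solve $t^{-4}v=-3$, that is, $t^4=-v/3$. This is exactly where the hypothesis enters: for $v\neq 0$ the right-hand side is a nonzero complex number and hence has a fourth root $t\in\mathbb{C}^*$, whereas for $v=0$ no such $t$ exists and the argument genuinely collapses (consistent with $\mathcal{C}_{3,0}$ being the commuting variety rather than $\mathcal{C}_3$). Choosing such a $t$ yields $\mathcal{C}_{3,v}\cong\mathcal{C}_{3,-3}=\mathcal{C}_3$, as required.

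The only substantive work is the weight bookkeeping in the displayed identities: one must recognize that the superficially inhomogeneous relations $r_3,r_4,r_5$ become quasi-homogeneous precisely when $v$ is placed in weight $4$, which is what forces $\sigma_t$ to act on the parameter by $t^{-4}$. Once this is verified the conclusion is immediate, so I expect the main obstacle to be computational verification rather than any conceptual difficulty.
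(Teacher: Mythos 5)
Correct, and essentially the paper's own approach: the paper likewise produces the isomorphism by a linear rescaling of the matrices --- it uses $(X,Y)\mapsto(\alpha X,Y)$, under which $v\mapsto\alpha^2 v$ and $(r_1,\dots,r_5)\mapsto(\alpha^2r_1,\alpha^3r_2,\alpha^4r_3,\alpha^3r_4,\alpha^2r_5)$, and then sets $\alpha^2=-3/v$. Your diagonal scaling $(A,B)\mapsto(tA,tB)$, with $v$ placed in weight $4$ and $t^4=-v/3$, is the same quasi-homogeneity argument with a different choice of one-parameter rescaling, and it breaks down at $v=0$ for exactly the same reason.
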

	\begin{proof}
		Consider a map $\phi\colon (\mathcal{M}_3\times \mathcal{M}_3) /\!/\GL_3 \to (\mathcal{M}_3\times \mathcal{M}_3) /\!/\GL_3$ defined by $(X,Y)\mapsto (\alpha X,Y)$. Then 
		\[
		\begin{split}
			(a_1,a_2, a_3,a_4,a_5,a_6,a_7,a_8,a_9)&\mapsto (\alpha a_1,a_2,\alpha^2 a_3,\alpha a_4, a_5, \alpha^3 a_6,\alpha^2 a_7,\alpha a_8,a_9)\\
			(v,w)& \mapsto (\alpha^2v,\alpha^3w)\\
			(r_1,r_2,r_3,r_4,r_5) & \mapsto (\alpha^2r_1,\alpha^3 r_2, \alpha^4 r_3, \alpha^3 r_4, \alpha^2 r_5).
		\end{split}
		\]
		and relation (\ref{new_Drensky_rel}) is mapped to itself with a multiple $\alpha^6$. Assuming $\alpha^2=\frac{-3}{v}$ we obtain $v\mapsto -3$ and $\mathcal{C}_{3,v}$ is isomorphic to $\mathcal{C}_3$.
	\end{proof}
	
	In case $v=0$ we have the following 
	\begin{theorem}
		The coordinate ring of the commuting variety of $3\times 3$ matrices is $\mathbb{C}[a_1,a_2]\otimes \mathbb{C}[a_3,a_4,a_5,a_6,a_7,a_8,a_9]/J,$ where $J$ is generated by
		\begin{equation} \label{id2z}
			\begin{split}
				a_3a_9-2a_4a_8+a_5a_7\\
				a_5a_6-2a_4a_7+a_3a_8\\
				a_3^2a_5-a_3a_4^2+6a_6a_8-6a_7^2\\
				a_3a_4a_5-a_4^3+3a_6a_9-3a_7a_8\\
				a_3a_5^2-a_4^2a_5+6a_7a_9-6a_8^2
			\end{split}
		\end{equation}
	\end{theorem}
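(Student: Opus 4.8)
The plan is to mirror exactly the derivation that produced the five relations (\ref{id1z}) for $\mathcal{C}_3$, but now working under the hypothesis that $X$ and $Y$ commute rather than that $\rank([X,Y]+I_3)=1$. The commuting variety is the set of pairs $(X,Y)$ with $[X,Y]=0$, and passing to traceless versions $A,B$ we still have $[A,B]=0$. The crucial observation is that the proof of Proposition \ref{main_5eq} splits naturally into two kinds of input: the polynomial identities coming from the Cayley-Hamilton expansion of $\tr((A+tB)^5)$ and $\tr((A+tB)^6)$, which hold for \emph{any} traceless $3\times 3$ matrices and are therefore available here verbatim, and the four ``Calogero-Moser'' identities (\ref{A3B2=ABABA})--(\ref{A3B3}) together with (\ref{A^4B^2})--(\ref{(A^2B)^2}), which were derived from the defining equation (\ref{vector-covector}) and carry the additive constants (the terms $9a_3$, $9a_4$, $9a_5$, and the shifts by $\frac12 n(n-1)$ etc.). The strategy is to recompute these latter identities with $[A,B]=0$ in place of $[A,B]^2=I_3+(n-2)cr$, and watch the constant terms vanish.

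First I would record that when $A$ and $B$ commute all the ``commutator-correction'' constants disappear: from $[A,B]=0$ one gets $\tr(A^2B^2)=\tr(ABAB)$ directly (rather than differing by $\frac12 n(n-1)=3$), and similarly the right-hand sides of (\ref{A2BAB2-A2B2AB}), (\ref{(AB)^3}), (\ref{A3B3}) lose their $\frac{n(n-1)(n-2)}{k}$ terms, so that $\tr(A^2BAB^2)=\tr(A^2B^2AB)$, $\tr((AB)^3)=\tr(A^2B^2AB)+(n-1)\tr(AB)$, and so on. In fact, since commuting matrices can be simultaneously triangularized, every trace of a word in $A,B$ equals the trace of the corresponding commutative monomial, which collapses many of these identities to trivialities. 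Next I would redo the two displays (\ref{A^4B^2}) and (\ref{(A^2B)^2}): tracking through the proof of that Proposition, the terms $-\frac32\tr(A^2)$ and $+\frac12\tr(A^2)$ arose precisely from the $rBAc$, $rABc$ discrepancy $\pm\frac12 n(n-1)$ and from the $+2\tr(A^2)$ produced by $cr$ in (\ref{ABA^2}); with $[A,B]=0$ these vanish, giving $\tr(A^4B^2)=\tr(A^3BAB)=\tr((A^2B)^2)$ with no constant.

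Having cleared the constants, I would substitute back into the coefficient comparisons $c_2,c_3,d_2,d_3,d_4$ from the proof of Proposition \ref{main_5eq}. The two relations coming from $c_2,c_3$ never involved any constant and reproduce $a_3a_9-2a_4a_8+a_5a_7=0$ and $a_5a_6-2a_4a_7+a_3a_8=0$ unchanged, matching the first two lines of (\ref{id2z}). For $d_2,d_3,d_4$, the only difference from the $\mathcal{C}_3$ case is that the additive $9a_3,\,9a_4,\,9a_5$ (which had entered through those same discarded constants) are now absent, yielding exactly $a_3^2a_5-a_3a_4^2+6a_6a_8-6a_7^2=0$ and the two companion relations in (\ref{id2z}). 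This establishes that the five generators of $J$ vanish on the commuting variety, i.e.\ one inclusion of coordinate rings.

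The harder half — and the step I expect to be the main obstacle — is showing these relations \emph{generate} the full ideal of the commuting variety, so that the quotient ring is genuinely its coordinate ring rather than merely a ring with a surjection onto it. The clean route parallels the paper's own strategy for $\mathcal{C}_3$ in Section \ref{transitivity}: define the affine variety $\mathcal{C}_{3,0}$ cut out by (\ref{id2z}), and show the affine Cremona group $G$ acts on it (the scaling argument in the previous theorem fails at $v=0$, so transitivity is replaced by the orbit analysis promised in Section \ref{C_30_last}, where the commuting variety splits into three $G$-orbits). One then argues that $\mathcal{C}_{3,0}$ and the image of the commuting variety in $(\mathcal{M}_3\times\mathcal{M}_3)/\!/\GL_3$ have the same dimension and that the latter is reduced and irreducible on each orbit, forcing the ideals to coincide; alternatively, one verifies directly via Gröbner-basis computation that (\ref{id2z}) is prime of the correct dimension $11$ and equals the vanishing ideal. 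Establishing that no further relations are needed, rather than just that (\ref{id2z}) is satisfied, is where the real content lies.
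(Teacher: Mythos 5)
The forward half of your proposal is correct and is exactly what the paper does (in one sentence): when $[A,B]=0$, Remark \ref{remark_9_gen} gives $v=w=0$, and rerunning the coefficient comparison of Proposition \ref{main_5eq} with all the commutator-induced constants gone yields precisely the constant-free relations \eqref{id2z}. Your detailed tracking of where the constants $9a_3$, $9a_4$, $9a_5$ and the $\frac12 n(n-1)$-type shifts come from, and why they vanish on commuting pairs, is a faithful elaboration of the paper's ``similar to the proof of Proposition \ref{main_5eq}''.

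The converse, which you yourself flag as the main obstacle, is where your proposal has a genuine gap: neither of your two suggested strategies is the paper's argument, and neither is developed far enough to work. The paper's mechanism is this: by Theorem \ref{main2}, the variety $\mathcal{C}_{3,0}$ cut out by \eqref{id2z} is the union of exactly three $G$-orbits, and the paper exhibits explicit matrix representatives $(O_3,O_3)$, $(A,B)$, $(A,C)$ of these orbits, where $A$ is the nilpotent Jordan block and $B$, $C$ are diagonal. The pairs $(A,B)$ and $(A,C)$ do \emph{not} commute, so one must explain why they nevertheless define the same points of the GIT quotient as commuting pairs. The paper does this by a degeneration: conjugation by $\textrm{diag}(t^3,t^2,t)$ sends $A$ to $tA$ while fixing $B$ and $C$, so letting $t\to 0$ shows that the commuting pairs $(O_3,B)$ and $(O_3,C)$ lie in the $\GL_3$-orbit closures of $(A,B)$ and $(A,C)$; since a GIT quotient identifies an orbit with the closed orbit in its closure, and since the $G$-action preserves commutativity, every point of $\mathcal{C}_{3,0}$ is the image of a commuting pair. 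This degeneration inside the quotient is the idea missing from your sketch. Your first substitute (``same dimension and \dots\ reduced and irreducible on each orbit, forcing the ideals to coincide'') is not an argument: irreducibility of the zero set of \eqref{id2z} is exactly what is unproven at that stage, and the quoted phrase has no precise meaning. Your fallback Gr\"obner-basis verification is not carried out and is miscalibrated: the quotient of the commuting variety is $\mathrm{Sym}^3(\mathbb{C}^2)$ (compare the remark following the theorem, citing \cite{Do} and \cite{Va}), of dimension $6$ --- equivalently the zero set of $J$ in the seven variables $a_3,\dots,a_9$ has dimension $4$ --- not $11$.
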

	\begin{proof} If matrices $X$ and $Y$ commute, by Remark \ref{remark_9_gen} we get $v=w=~\negthickspace0$. Similar to the proof of  Proposition \ref{main_5eq} we can establish the relations (\ref{id2z}). 
		
		Conversely, consider the variety $\mathcal{C}_{3,0}$. By Theorem \ref{main2} there are exactly three $G$-orbits. The following pairs of matrices are the corresponding representatives:
		\[(O_3,O_3), \ (A, B), \ (A,C), 
		\]
		where 
		$$A=\begin{bmatrix}
			0&1&0\\
			0&0&1\\
			0&0&0
		\end{bmatrix}, B=\begin{bmatrix}
			-\frac{3i+\sqrt{3}}{2\sqrt[6]{3^5}}&0&0\\
			0&\frac{3i-\sqrt{3}}{2\sqrt[6]{3^5}}&0\\
			0&0&\frac{1}{\sqrt[3]{3}}
		\end{bmatrix}, C=\begin{bmatrix}
			-\frac{3i+\sqrt{3}}{2\sqrt[6]{3^5}}&0&0\\
			0&\frac{3i-\sqrt{3}}{2\sqrt[6]{3^5}}&0\\
			0&0&\frac{1}{\sqrt[3]{3}}
		\end{bmatrix}.$$ 
		
		Conjugating  the matrices $A$,  $[A,B]$ and $[A,C]$ with a diagonal matrix $\textrm{diag}(t^3,t^2, t)$, we get $\begin{bmatrix}
			0 & t & 0\\
			0 & 0 & t\\
			0 & 0 & 0\\
		\end{bmatrix}$, while $B$ and $C$ are unchanged. Thus, both $(A,B)$ and $(A,C)$ fall into the closure of the orbit corresponding to a commuting pairs of matrices.  Since $G$- and $\GL_n$-actions commute, we obtain that $\mathcal{C}_{3,0}$ as a GIT-quotient is equal to the commuting variety. 
	\end{proof}
	
	\begin{remark} Note, in \cite{Do} and \cite{Va} an isomorphism  $\mathcal{C}_{3,0}\cong (\mathbb{C}[x,y]^{\otimes 3} )^{S_3}$ is established. 
	\end{remark}
	
	In the last section we provide the proof of the following result on the orbits of the commuting variety under the affine Cremona group action.
	\begin{theorem}\label{main2}
		The $G$-orbits of $\mathcal{C}_{3,0}$ are
		$$[(0,0,0,0,0,0,0,0,0)], \ [(0,0,0,0,\sqrt[3]{6},0,0,0,1)], \  [(0,0,0,0,0,0,0,0,1)].$$
	\end{theorem}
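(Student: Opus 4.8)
The plan is to classify the $G$-orbits directly on the affine variety $\mathcal{C}_{3,0}\subset \mathbb{C}^9$ defined by the relations (\ref{id2z}), working in the coordinate ring through the induced action of the generators $\Phi_p,\Psi_q$ on $a_1,\dots,a_9$. Write $R_0,R_1,R_2$ for the three tuples in the statement, in order. First I would dispose of $a_1,a_2$: the constant shifts $\Phi_c,\Psi_c$ with $c\in\mathbb{C}$ replace $X,Y$ by $X+cI_3,Y+cI_3$, hence fix the traceless parts $A,B$ and all of $a_3,\dots,a_9$ while translating $a_1,a_2$. Since they can be applied last, it suffices to classify the induced action on $(a_3,\dots,a_9)$ and afterwards translate $a_1,a_2$ to $0$. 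That the three tuples lie in $\mathcal{C}_{3,0}$ is immediate: each has $a_3=a_4=a_6=a_7=a_8=0$, and with these five coordinates vanishing every monomial of every polynomial in (\ref{id2z}) contains a vanishing factor, so all relations hold. Finally, when $p,q$ have no constant term the induced maps on $(a_3,\dots,a_9)$ are polynomials vanishing at the origin, so the zero tuple is $G$-fixed and $[R_0]$ is a single orbit.

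Next I would record the explicit action. After normalising $a_1=a_2=0$, the generators act by $A\mapsto A+\widetilde{p(B)}$, $B\mapsto B$ and symmetrically, where $\widetilde{(\cdot)}$ denotes the traceless part; by Cayley--Hamilton (\ref{tracelessCayleyHamilton}) one has $\widetilde{B^{\,k}}\in\langle B,\widetilde{B^{\,2}}\rangle$ for every $k$, so it suffices to treat the linear shears $A\mapsto A+tB$, $B\mapsto B+sA$ and the quadratic shears $A\mapsto A+t\,\widetilde{B^{\,2}}$, $B\mapsto B+s\,\widetilde{A^{\,2}}$. Taking traces of the relevant products and reducing all higher powers by (\ref{tracelessCayleyHamilton}) yields closed polynomial formulas for the transformed coordinates; for instance the linear shear gives $(a_3,a_4,a_5)\mapsto(a_3+2ta_4+t^2a_5,\ a_4+ta_5,\ a_5)$. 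Using these, the first stage of the reduction is to annihilate the ``$A$-block'' $a_3,a_4,a_6,a_7,a_8$, bringing an arbitrary point to the normal form $(0,0,a_5,0,0,0,a_9)$; on this slice the relations (\ref{id2z}) hold identically and impose no relation between $a_5$ and $a_9$.

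The heart of the matter is the second stage, showing that $G$ collapses the two remaining parameters to exactly the three configurations $R_0,R_1,R_2$. It is convenient --- and literally correct on the commuting locus --- to read $a_5,a_9$ as $\sum_i\beta_i^2$ and $\sum_i\beta_i^3$ for a centred triple $(\beta_1,\beta_2,\beta_3)$, so that the three orbits correspond to a triple point ($R_0$), one coincidence ($R_1$: the second matrix acquires a double eigenvalue, and imposing $a_9=1$ forces $a_5=\sqrt[3]{6}$ via $a_5^3=6a_9^2$), and three distinct points ($R_2$: $a_5=0,\ a_9=1$ gives the cube roots of $\tfrac13$). For transitivity within a fixed coincidence pattern I would argue by Lagrange interpolation: placing the second matrix on the ``vertical'' locus, a shear $\Phi_p$ with prescribed values $p(\beta_i)$ spreads the configuration, a shear $\Psi_q$ resets the eigenvalues to any prescribed centred triple of the same pattern, and a final $\Phi_{p'}$ restores the first matrix to spectral triviality. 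Although $G$ is unimodular and admits no free rescaling of $(a_5,a_9)$, these horizontal excursions genuinely move $(a_5,a_9)$, which is exactly what lets one reach the specific normalisations $\sqrt[3]{6}$ and $1$.

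It remains to see that $R_0,R_1,R_2$ are pairwise inequivalent, for which I would use orbit dimension: linearising the shear action at each representative should show that the three orbits have different dimensions, namely $0$, $2$, and $4$ (equivalently, the coincidence pattern of the spectrum is a $G$-invariant taking three distinct values), so they cannot coincide. I expect the main obstacle to be making the second stage fully rigorous inside the coordinate ring: exhibiting the interpolating shears explicitly as elements of $G$, verifying through the transformation formulas that they realise the asserted moves on $(a_5,a_9)$, and --- most delicately --- treating the degenerate sub-cases in which eigenvalues collide so that the interpolation nodes merge. This last point is not a nuisance but the crux, since it is precisely the failure of the nodes to separate that keeps the three patterns from merging and produces exactly three orbits rather than one.
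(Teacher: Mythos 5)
Your approach is sound in outline but genuinely different from the paper's, and it carries one serious execution gap plus one logical trap to avoid. The paper never interprets points of $\mathcal{C}_{3,0}$ as anything but tuples satisfying (\ref{id2z}): Lemma \ref{a_3=0} and Lemma \ref{b_7=0}, together with the final argument, form an explicit case analysis using the $\SL_2$-moves $\Theta_M$ and quadratic shears, sending every point outside the two exceptional orbits to $(0,0,0,0,0,0,1)$; the relations (\ref{id2z}) are invoked at each branch to kill coordinates (e.g.\ once $a_3=a_7=0$ they force $a_5a_6=a_4a_8=a_6a_8=0$), and pairwise distinctness of the three orbits is left implicit. Your plan replaces the second half of this analysis by geometry: after reaching the slice $a_3=a_4=a_6=a_7=a_8=0$, read $(a_5,a_9)$ as power sums of a centred eigenvalue triple, get transitivity within each coincidence pattern by interpolation shears, and separate the patterns by a $G$-invariant (the pattern itself, or orbit dimension $0,2,4$). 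That buys a conceptual explanation of \emph{why} there are exactly three orbits and, unlike the paper, an actual argument for distinctness; your identification of the normal forms ($a_5^3=6a_9^2$ forcing $a_5=\sqrt[3]{6}$ when $a_9=1$, and the cube roots of $\tfrac13$ for the third representative) is correct.

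The gap: your stage 1 --- annihilating $a_3,a_4,a_6,a_7,a_8$ for an \emph{arbitrary} point of the variety cut out by (\ref{id2z}) --- is asserted in one sentence, but it is precisely where the paper's two lemmas and essentially all of its computational work live; the degenerate branches (the ones you yourself call the crux) can only be handled at the level of the coordinate formulas together with the relations (\ref{id2z}), and until that case analysis is carried out the proof is not complete. The trap: in this paper the identification of $\mathcal{C}_{3,0}$ with the commuting variety, hence with centred triples of points in $\mathbb{C}^2$, is \emph{deduced from} Theorem \ref{main2} (it is the ``conversely'' part of the theorem preceding it), so you may not use the configuration reading for arbitrary points of the variety without making the argument circular. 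It is legitimate exactly where you invoke it: a slice point $(0,0,a_5,0,0,0,a_9)$ visibly comes from a commuting pair $(0,B)$ with $B$ a traceless matrix satisfying $\tr(B^2)=a_5$, $\tr(B^3)=a_9$, and $G$-images of commuting pairs are commuting pairs since $[X+p(Y),Y]=[X,Y]$. Your phrase ``literally correct on the commuting locus'' suggests you sensed this, but the final write-up must keep stage 1 strictly at the level of coordinates and use eigenvalue configurations only on the slice and on orbits through it.
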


	\section{Proof of Theorem \ref{main} }\label{transitivity}
	
	For any point $A=(a_i)_{1\leq i \leq 9} \in \mathbb{C}^9$ we have $$(\Psi_{-\frac{a_1}3} \circ \Phi_{-\frac{a_2}3})(A)=(0,0,a_3,\dots,a_9).$$
	Hence, when studying $G$-orbits, we can neglect the first two coordinates and work with seven-tuples instead.    
	
	Note that $\Phi_p$ fixes $a_1,a_3,a_6$ and $\Psi_p$ fixes $a_2,a_5,a_9$. We frequently use in what follows the explicit action of $\Phi_p$ for a quadratic monomial $p(x)=\alpha x^2$:
	\[
	\begin{split}
		a_4&\mapsto a_4+\alpha(a_6+\frac{2}{3}a_1a_3)\\
		a_5&\mapsto a_5+\alpha(2a_7+\frac{4}{3}a_1a_4)+\alpha^2(\frac{4}{9}a_1^2a_3+\frac{1}{6}a_3^2+\frac{4}{3}a_1a_6)\\
		a_7&\mapsto a_7+\frac{1}{6}\alpha(a_3^2+4a_1a_6)\\
		a_8&\mapsto a_8+\alpha(\frac{1}{3}a_3a_4+\frac{4}{3}a_1a_7)+\alpha^2(\frac{2}{9}a_1a_3^2+\frac{4}{9}a_1^2a_6+\frac{1}{6}a_3a_6)\\
		a_9&\mapsto a_9+\alpha(a_4^2-\frac{1}{2}a_3a_5+2a_1a_8-3)+\alpha^2(\frac{2}{3}a_1a_3a_4+a_4a_6+\frac{4}{3}a_1^2a_7-\frac{1}{2}a_3a_7)\\
		& +\alpha^3(\frac{2}{9}a_1^2a_3^2-\frac{1}{36}a_3^3+\frac{8}{27}a_1^3a_6+\frac{1}{3}a_1a_3a_6+\frac{1}{3}a_6^2)\\
	\end{split}
	\]
	For $\Psi_{\alpha x^2}$ the results are symmetric.

	For a matrix $M=\begin{bmatrix}
		\alpha & \beta\\
		\gamma & \delta
	\end{bmatrix} \in \SL_2$ (that is $\alpha \delta-\beta \gamma=1$) consider a map $\Theta_M\colon  \mathcal{C}_3\to \mathcal{C}_3$ defined by 
	$(X,Y)\mapsto (\alpha X+\beta Y,\gamma X+\delta Y)$. This map belongs to $G$ since it is a composition of the automorphisms of type (i) and (ii) of Defintion \ref{Cremona} for some linear polynomials $p$ and $q$. The seven-tuple  $(a_3,\dots,a_9)$ under this map changes as follows:
	\[\begin{split}
		\Theta_M( a_3)= &  \alpha^2 a_3+2\alpha\beta a_4+\beta^2 a_5\\
		\Theta_M( a_4)= &  \alpha\gamma a_3+(\alpha\delta+\beta\gamma) a_4+\beta\delta a_5\\
		\Theta_M( a_5)= &  \gamma^2 a_3+2\gamma\delta a_4+\delta^2 a_5\\
		\Theta_M( a_6)= &  \alpha^3 a_6+3\alpha^2\beta a_7+3\alpha\beta^2 a_8+\beta^3 a_9\\
		\Theta_M( a_7)= &  \alpha^2\gamma a_6+2\alpha\beta\gamma a_7+\alpha^2\delta a_7+\beta^2\gamma a_8+2\alpha\beta\delta a_8+\beta^2\delta a_9\\
		\Theta_M(a_8 )= &  \alpha\gamma^2 a_6+2\alpha\delta\gamma a_7+\gamma^2\beta a_7+\delta^2\alpha a_8+2\gamma\beta\delta a_8+\beta\delta^2 a_9\\
		\Theta_M(a_9 )= &   \gamma^3 a_6+3\gamma^2\delta a_7+3\gamma\delta^2 a_8+\delta^3 a_9
	\end{split} .
	\]
	Let us define $\mathcal{D}_3$ to be a subset of $\mathbb{C}^7$, points of which are defined by relations~(\ref{id1z}). By Proposition \ref{main_5eq} we have $\mathcal{C}_3\subseteq \mathbb{C}^2\times\mathcal{D}_3$.  We will prove that they coincide. Since $G$ acts transitively on $\mathcal{C}_3$, it suffices to show that $G$ acts transitively on $\mathcal{D}_3$. In order to establish that, we need the following lemmas. 
	
	\begin{lemma}\label{a_7=0}
		Let $A=(0,a_4,a_5,a_6,0,a_8,a_9)\in \mathcal{D}_3$. Then there exists $g\in G$ such that either $g(A)$ is a zero point or  $g(A)=(0,a_4',a_5',a_6',a_7',a_8',a_9')$  with $a_7'\neq 0$. 
	\end{lemma}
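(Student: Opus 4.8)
The goal of Lemma~\ref{a_7=0} is to take a point in $\mathcal{D}_3$ that already has its first coordinate (namely $a_1$, or rather $a_3$ in the seven-tuple indexing, since $a_7=0$ here) arranged and use elements of $G$ to either collapse it to the origin or force the coordinate $a_7$ to become nonzero. The plan is to exploit the explicit action formulas of $\Phi_p$ and $\Theta_M$ recorded just above the statement. The key observation is the transformation rule for $a_7$ under $\Phi_{\alpha x^2}$: with $a_1=0$ it reads $a_7 \mapsto a_7 + \tfrac{1}{6}\alpha a_3^2$. So if $a_3 \neq 0$, a single quadratic $\Phi$-move with any nonzero $\alpha$ sends $a_7 = 0$ to $\tfrac{1}{6}\alpha a_3^2 \neq 0$, and we are done immediately in that subcase.

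The substantive work is therefore the case $a_3 = 0$ (together with $a_7 = 0$ and $a_1=0$, as given). Here I would feed these vanishing conditions into the defining relations (\ref{id1z}). Setting $a_3=a_7=0$, the first relation becomes $-2a_4a_8 + a_5\cdot 0 = 0$, i.e.\ $a_4a_8=0$; the second becomes $a_5a_6=0$; the third becomes $6a_6a_8=0$; the fourth reduces to $9a_4 - a_4^3 + 3a_6a_9 - 0 = 0$; and the fifth becomes $9a_5 - a_4^2a_5 + 0 - 6a_8^2 = 0$. The strategy is to run through the resulting case distinctions ($a_4=0$ versus $a_8=0$, $a_5=0$ versus $a_6=0$, etc.) and in each branch locate a $\Phi$- or $\Psi$-move, or a $\Theta_M$ rotation from $\SL_2$, that either produces a nonzero value in the $a_7$-slot or drives everything to zero. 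For instance, if after reduction one finds $a_6 \neq 0$, applying a suitable $\Theta_M$ (a linear mixing) typically transfers the nonvanishing of $a_6$ into the $a_7$-coordinate via the rule $\Theta_M(a_7) = \alpha^2\gamma a_6 + \dots$, which is nonzero for generic $\gamma$; whereas if the reduced relations force $a_4=a_5=a_6=a_8=a_9=0$ as well, the point is already the origin.

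A cleaner way to organize the $a_3=0$ analysis is to first apply a generic rotation $\Theta_M \in G$ to move off the locus $a_3=0$ whenever the point is not fixed by all such rotations: since $\Theta_M(a_3) = \alpha^2 a_3 + 2\alpha\beta a_4 + \beta^2 a_5$, if either $a_4$ or $a_5$ is nonzero then some $M\in\SL_2$ makes $\Theta_M(a_3)\neq 0$, returning us to the easy case above (after checking $a_7$ can then be made nonzero, or re-applying the quadratic $\Phi$-move). This reduces the genuinely hard core to the situation $a_3=a_4=a_5=0$ (all the quadratic-trace data vanishing) together with $a_7=0$, where the relations (\ref{id1z}) collapse dramatically—the third through fifth equations give $a_6a_8=0$, $a_6a_9=0$, $a_8^2=0$—forcing $a_8=0$ and hence $a_6a_9$ unconstrained only through $a_6a_9=0$. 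One then checks directly that such a residual point, with at most $a_6$ or $a_9$ surviving, can be rotated by $\Theta_M$ to expose a nonzero $a_7$, or is already zero.

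The main obstacle I anticipate is purely bookkeeping: ensuring the case tree over the vanishing/non-vanishing of $a_4,a_5,a_6,a_8,a_9$ is exhaustive and that in every branch an explicit group element is exhibited, while being careful that a move fixing $a_3=0$ does not accidentally reintroduce $a_7=0$ after one thought it was made nonzero. The formulas are all linear or low-degree polynomial in the parameters $\alpha,\beta,\gamma,\delta$, so genericity arguments (a nonzero polynomial in the $\SL_2$-entries is nonvanishing for some choice) suffice to guarantee existence of the required $g\in G$ in each branch, and no single step should be computationally heavy once the relations (\ref{id1z}) are substituted with $a_1=a_3=a_7=0$.
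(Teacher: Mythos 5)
Your reduction of the relations (\ref{id1z}) under $a_3=a_7=0$ is correct, and your treatment of the residual locus $a_3=a_4=a_5=0$ (the relations then force $a_8=0$ and $a_6a_9=0$, and a $\Theta_M$ rotates the surviving $a_6$ or $a_9$ into the $a_7$-slot while automatically preserving $a_3=0$) is sound. But there are two genuine gaps. First, the conclusion of the lemma is not merely ``$a_7'\neq 0$'': the image point must again have the form $(0,a_4',\dots,a_9')$, i.e.\ first coordinate zero --- this is exactly what Lemma \ref{000abcdef} needs when it invokes the present lemma and then applies $\Phi_{-\frac{a_5}{2a_7}t^2}$. Your ``cleaner organization'' (apply a generic $\Theta_M$ to make $a_3\neq0$ whenever $a_4$ or $a_5$ is nonzero, then use $a_7\mapsto a_7+\frac16\alpha a_3^2$) terminates at a point whose first coordinate is nonzero, so it does not prove the statement; and undoing this with a further $\Theta_N$ satisfying $\Theta_N(a_3)=0$ can annihilate $a_7$ again, so the repair is not automatic bookkeeping.

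Second, and more seriously, the cases your genericity argument defers are precisely the hard ones, and genericity over $\SL_2$ fails there outright. The point $(0,\pm3,a_5,0,0,0,0)$ lies in $\mathcal{D}_3$ (indeed $9a_4-a_4^3=0$ and $9a_5-a_4^2a_5=0$), all of its cubic traces $a_6,a_7,a_8,a_9$ vanish, and $\Theta_M$ acts \emph{linearly} on the quadruple $(a_6,a_7,a_8,a_9)$; hence no element of the $\SL_2$-family, generic or not, can produce $a_7'\neq0$ from it. Likewise, for $(0,0,\frac23a_8^2,0,0,a_8,a_9)$ with $a_8\neq0$ (the branch $a_8\neq0$ of the relations), any $M$ with $\Theta_M(a_3)=\beta^2\cdot\frac23a_8^2=0$ forces $\beta=0$, and then $\Theta_M(a_7)=0$. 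The mechanism that closes these cases --- and it is the crux of the lemma --- is a quadratic move $\Phi_{t^2}$ or $\Psi_{\alpha t^2}$, which fixes $a_3$ and creates a nonzero cubic trace ``from nothing'' via the Calogero--Moser constant: $a_9\mapsto a_9+\alpha\bigl(a_4^2-\tfrac12a_3a_5-3\bigr)+\cdots$, the $-3$ reflecting $v=-3$, so for instance $(0,\pm3,a_5,0,0,0,0)\mapsto(0,\pm3,a_5,0,0,0,6)$; only after that can a suitable $\Theta_M$, chosen so as to keep the first coordinate zero, rotate the new $a_9$ into the $a_7$-slot. Your proposal mentions $\Phi$/$\Psi$-moves only in passing and classifies the remainder as low-degree bookkeeping, but without this specific two-step mechanism the case analysis cannot be completed.
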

	
	\begin{proof} Equalities (\ref{id1z}) yield
		$$a_5a_6=0, \ a_4a_8=0, \ 3a_6a_9+9a_4-a_4^3=0, \ 9a_5-a_4^2a_5-6a_8^2=0, \ a_6a_8=0 .$$	
		Consider several cases.

		\textit{Case 1. } Let $a_8\neq0$. Then we have 
		$$a_4=a_6=0, \ 3a_5-2a_8^2=0$$
		and $A=(0,0,\frac{2}{3}a_8^2,0,0,a_8,a_9)$. One can compute, that $$A':=\Phi_{\frac{1}{3}a_9t^2}(A)=(0,0,\frac{2}{3}a_8^2,0,0,a_8,0).$$ 
		Then consider $\Psi_{\alpha t^2}(A')=(b_3,0,b_5,b_6,0,b_8,0)$, where $b_3,b_5,b_6,b_8$ are non-zero polynomials in $\alpha$. Hence, we can choose a value of $\alpha$ such that $b_3b_5b_8\neq0$. 
		
		Consider $\Theta_M((b_3,0,b_5,b_6,0,b_8,0))$ with $M=\begin{bmatrix}
			-\sqrt{\frac{b_5}{b_3}} & 1\\
			\frac14 & -\frac54 \sqrt{\frac{-b_3}{b_5}}
		\end{bmatrix}$. Then the fifth coordinate is equal to $b_8$, which is not zero. 
		
		\
		
		\textit{Case 2. } Let $a_8=0$. 
		
		\textit{Case 2.1.} Let $a_6\neq0$. Then we have $a_5=0$, \ $3a_6a_9+9a_4-a_4^3=0$ and $A=(0,a_4,0,a_6,0,0,a_9)$. This implies
		$$\Phi_{-\frac{a_4}{a_6}t^2}(A)=(0,0,0,a_6,0,0,0).$$
		Now  $\Psi_{\frac{1}{3}a_6t^2}$ sends the point above to a zero point. 
		
		\textit{Case 2.2.} Let $a_6=0$. Then we have $9a_4-a_4^3=0, \ 9a_5-a_4^2a_5=0$ and $A$ is either  $(0,0,0,0,0,0,a_9)$ or $(0,\pm3,a_5,0,0,0,a_9)$. Applying $\Phi_{\frac{1}{3}a_9t^2}(A)$ we may assume $a_9=0$. 
		
		\textit{Case 2.2.1.} Let $a_5=0$.  Note that $\Psi_{t^2}(A)= (0\pm3,0,6,0,0,0)$ and further applying $\Theta_M$ with $M=\begin{bmatrix}
			1 & 0\\
			1 & 1
		\end{bmatrix}$ we obtain the point $(0\pm3,\pm6,6,6,0,0)$. 
		
		\textit{Case 2.2.2.} Let $a_5\neq0$. Then  $\Phi_{t^2}(A)=(0\pm3,a_5,0,0,0,6)$ and applying $\Theta_M$ for $M=\begin{bmatrix}
			\mp\frac{a_5}{6} & 1\\
			-1\mp\frac{a_5}{6} & 1
		\end{bmatrix}$ we obtain the point $(0\mp3,\mp6,6,6,6,6)$.
	\end{proof}

	\begin{lemma}\label{000abcdef}
		Let $A=(0,a_4,a_5,a_6,a_7,a_8,a_9)\in \mathcal{D}_3$. Then there is $g\in G$ such that $g(A)$ is a zero point.
	\end{lemma}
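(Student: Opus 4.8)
The plan is to reduce the general point $A=(0,a_4,a_5,a_6,a_7,a_8,a_9)\in\mathcal{D}_3$ to a point with $a_7=0$, so that the already-established Lemma \ref{a_7=0} can be invoked to finish the argument (possibly after one further reduction, since Lemma \ref{a_7=0} itself does not directly output a zero point but rather a point with $a_7'\neq 0$, whereas here I want to \emph{create} a vanishing coordinate to start a clean induction). Concretely, first I would use the transformation $\Theta_M$ with a well-chosen $M\in\SL_2$ to kill $a_7$. Since $\Theta_M$ acts on the pair $(a_3,a_4,a_5)$ as a quadratic form substitution and on $(a_6,a_7,a_8,a_9)$ as the corresponding binary cubic substitution, the coordinate $\Theta_M(a_7)$ is a cubic expression in the entries of $M$; choosing $M$ to send one coordinate axis to a root of this cubic lets me arrange $\Theta_M(a_7)=0$. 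After this normalization I may assume $a_7=0$, placing me in the setting of Lemma \ref{a_7=0}.

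Having arranged $a_7=0$, the relations (\ref{id1z}) collapse drastically: with $a_7=0$ (and $a_3=0$ from the leading normalization) they read $a_5a_6=0$, $a_4a_8=0$, $a_6a_8=0$, together with the two cubic relations $3a_6a_9+9a_4-a_4^3=0$ and $9a_5-a_4^2a_5-6a_8^2=0$. These are exactly the equalities exploited in the proof of Lemma \ref{a_7=0}, whose conclusion is that some $g\in G$ sends $A$ either to the zero point or to a point with nonzero seventh coordinate. In the first case we are done immediately. In the second case I obtain a point with $a_7'\neq 0$, and the inspection of the end of Lemma \ref{a_7=0} shows the explicit normal forms that arise (the points $(0,\pm3,\pm6,6,6,0,0)$ and $(0,\mp3,\mp6,6,6,6,6)$); the task is then to drive each of these finitely many explicit points to the origin.

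The main body of the argument is therefore to take each surviving normal form and exhibit an explicit sequence of the elementary automorphisms $\Phi_{\alpha t^2}$, $\Psi_{\alpha t^2}$ (whose action is tabulated at the start of this section) and the linear maps $\Theta_M$ that returns it to $(0,\dots,0)$. For this I would exploit the fact that each such normal form has several vanishing coordinates, so that applying a single $\Phi$ or $\Psi$ with a parameter chosen as a root of a low-degree polynomial can annihilate a target coordinate while keeping the $\Phi$-fixed (respectively $\Psi$-fixed) coordinates $a_3,a_6$ (respectively $a_5,a_9$) under control. The relations (\ref{id1z}) are $G$-invariant, so at every stage the image remains in $\mathcal{D}_3$, which sharply constrains the coordinates and guarantees that once enough of them vanish the rest are forced to vanish as well.

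The hard part will be the bookkeeping in this last reduction: the action of a quadratic $\Phi_{\alpha t^2}$ on $a_9$ is a cubic polynomial in $\alpha$ and couples almost all coordinates, so choosing the parameters in the right order to avoid reintroducing a nonzero $a_7$ is delicate. I expect the cleanest route is to first restore $a_7=0$ after each $\Theta_M$ step, then use $\Phi$ and $\Psi$ to clear $a_8$ and $a_9$, and only then clear $a_4,a_5,a_6$, at each stage reading off the forced vanishing of the remaining coordinates from the membership relations (\ref{id1z}). Since only finitely many explicit normal forms survive from Lemma \ref{a_7=0}, this is a finite case check rather than a general maneuver, and the transitivity of $G$ on $\mathcal{D}_3$ — hence the identification $\mathcal{C}_3=\mathbb{C}^2\times\mathcal{D}_3$ and Theorem \ref{main} — follows once every case has been collapsed to the origin.
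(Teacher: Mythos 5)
There is a genuine gap, and it sits exactly where your plan puts all of the weight. Your strategy is: normalize to $a_7=0$, invoke Lemma \ref{a_7=0}, and then finish off its nonzero outputs, which you claim are the finitely many explicit points $(0,\pm3,\pm6,6,6,0,0)$ and $(0,\mp3,\mp6,6,6,6,6)$. That finiteness claim is false. Those two points arise only in Case 2.2 of the proof of Lemma \ref{a_7=0}; in Case 1 of that proof (the case $a_8\neq0$) the output is $\Theta_M\bigl(\Psi_{\alpha t^2}(\cdot)\bigr)$, a point whose fifth coordinate is nonzero but whose remaining coordinates depend on continuous parameters of the original point. So after your reduction you are left with a \emph{general} point having $a_7'\neq 0$ --- which is exactly where the real work begins --- and a finite case check cannot cover it. The argument is in effect circular: you convert $a_7\neq0$ into $a_7=0$, Lemma \ref{a_7=0} converts it back, and nothing in your proposal can then move a general point with $a_7\neq0$ to the origin. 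A secondary problem with your opening move: $\Theta_M$ does not preserve the normalization $a_3=0$, since $\Theta_M(a_3)=\alpha^2a_3+2\alpha\beta a_4+\beta^2a_5$; killing $a_7$ by choosing $M$ at a root of a cubic will generically make $a_3\neq0$, taking you outside the hypotheses of Lemma \ref{a_7=0} altogether.

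The missing idea is that $a_7\neq0$ is not an obstruction but the asset: it is precisely the hypothesis under which everything can be cleared explicitly, and this is how Lemma \ref{a_7=0} is meant to be chained with the present lemma. The paper's proof runs as follows. By Lemma \ref{a_7=0} one may assume $a_7\neq0$. If $a_5=0$, then the second relation in (\ref{id1z}) with $a_3=0$ gives $2a_4a_7=0$, hence $a_4=0$; if $a_5\neq0$, then $\Phi_p$ with $p(t)=-\tfrac{a_5}{2a_7}t^2$ sends $A$ to $(0,0,0,a_6,a_7,a_8,a_9')$: killing $a_5$ automatically kills $a_4$ because $a_5a_6=2a_4a_7$ by (\ref{id1z}), while $a_3,a_6,a_7$ are fixed by this $\Phi_p$. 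Then the single linear map $\Theta_M$ with $M=\left[\begin{smallmatrix} -a_8/a_7 & 1\\ -1-a_8/a_7 & 1\end{smallmatrix}\right]$ annihilates $a_6,a_7,a_8$ simultaneously (again using the relations), leaving $(0,0,0,0,0,0,a_9'')$, and finally $\Phi_{\frac13 a_9''t^2}$ kills the last coordinate, since on such a point the tabulated action reduces to $a_9\mapsto a_9''-3\alpha$. Your proposal contains no counterpart to this clearing maneuver, and without it the lemma is not proved.
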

	
	\begin{proof} Due to Lemma \ref{a_7=0} we may assume that $a_7\neq 0$. Moreover, if $a_5=0$ then $a_4=0$ due to relations \eqref{id1z}. If $a_5\neq 0$ then for $p(t)=-\frac{a_5}{2a_7}t^2$, taking into account equalities (\ref{id1z}) we find that  $$\Phi_{p}(A)=(0,0,0,a_6,a_7,a_8,a'_9)=:A'.$$
		
		Next, we have $\Theta_M(A')=(0,0,0,0,0,0,a''_9)$ for 
		$M=\begin{bmatrix}
			-\frac{a_8}{a_7} & 1\\
			-1-\frac{a_8}{a_7} & 1
		\end{bmatrix}$. Finally, $\Phi_{\frac{1}{3}a''_9t^2}(\Theta_M(A'))$ is a zero point. 
	\end{proof} 
	
	\begin{theorem}
		The group action on the set $\mathcal{D}_3$ is a transitive group action.
	\end{theorem}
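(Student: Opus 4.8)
The plan is to show transitivity by proving that every point of $\mathcal{D}_3$ lies in the same $G$-orbit, and the natural target is the zero point $(0,0,0,0,0,0,0)$, since we have already arranged (via $\Psi_{-a_1/3}\circ\Phi_{-a_2/3}$) to work with seven-tuples where the first two coordinates vanish. The decisive observation is that Lemma \ref{000abcdef} already does essentially all the work: it asserts that \emph{any} point $A=(0,a_4,a_5,a_6,a_7,a_8,a_9)\in\mathcal{D}_3$ can be sent to the zero point by some $g\in G$. Thus the first and main step is simply to invoke Lemma \ref{000abcdef} to conclude that every orbit contains the zero point.

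Concretely, I would argue as follows. Fix two arbitrary points $A,A'\in\mathcal{D}_3$. After normalizing their first two coordinates to zero (which costs nothing, as noted at the start of Section \ref{transitivity}, and which keeps us inside $\mathcal{D}_3$ because the defining relations \eqref{id1z} involve only $a_3,\dots,a_9$), Lemma \ref{000abcdef} produces $g,g'\in G$ with $g(A)=0$ and $g'(A')=0$, where $0$ denotes the zero seven-tuple. Since $G$ is a group, $(g')^{-1}\circ g\in G$ and it sends $A$ to $A'$. Hence any two points lie in one orbit, which is exactly transitivity. I should take a moment to confirm that the elements produced in the lemma are genuinely invertible in $G$: the maps $\Phi_p$, $\Psi_q$ are automorphisms of $\mathbb{C}[x,y]$ by Definition \ref{Cremona}, and $\Theta_M$ is invertible whenever $M\in\SL_2$ (its inverse being $\Theta_{M^{-1}}$), so every $g$ appearing in the reduction is a unit of $G$.

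The only real subtlety — and the step I would flag as the main obstacle — is bookkeeping about membership in $\mathcal{D}_3$ under the group action. The lemmas tacitly assume that the intermediate points remain in $\mathcal{D}_3$, i.e.\ continue to satisfy \eqref{id1z}. This is where the commutativity of the $G$-action with the $\GL_3$-action matters: by Proposition \ref{main_5eq} the relations \eqref{id1z} cut out precisely the image of $\mathcal{C}_3$, and since $G$ preserves $\mathcal{C}_3$ (the action permutes pairs of matrices with $\rank([X,Y]+I_3)=1$, as $[X+p(Y),Y]=[X,Y]$), the subvariety $\mathcal{D}_3$ is $G$-stable. I would state this invariance explicitly before running the reduction, so that every application of $\Phi_p$, $\Psi_q$, $\Theta_M$ in Lemmas \ref{a_7=0} and \ref{000abcdef} is legitimately taking $\mathcal{D}_3$ to $\mathcal{D}_3$.

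Finally, I would close by recording the payoff for Theorem \ref{main}. Having shown $G$ acts transitively on $\mathcal{D}_3$, and knowing from \cite[Theorem 1.3]{BW} that $G$ acts transitively on $\mathcal{C}_3$, the two $G$-sets $\mathbb{C}^2\times\mathcal{D}_3$ and $\mathcal{C}_3$ coincide: the inclusion $\mathcal{C}_3\subseteq\mathbb{C}^2\times\mathcal{D}_3$ from Proposition \ref{main_5eq} is an inclusion of single orbits through a common point, hence an equality. This yields the presentation $\mathbb{C}[\mathcal{C}_3]\cong\mathbb{C}[a_1,a_2]\otimes\mathbb{C}[a_3,\dots,a_9]/I$ with $I$ generated by \eqref{id1z}, completing the argument.
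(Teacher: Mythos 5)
There is a genuine gap here, and it sits exactly where the paper's own proof does its work: you have misread the hypothesis of Lemma \ref{000abcdef}. Points of $\mathcal{D}_3$ are seven-tuples $(a_3,a_4,\dots,a_9)$ --- the coordinates $a_1,a_2$ were already discarded \emph{before} $\mathcal{D}_3$ was defined --- so the leading zero in the lemma's hypothesis ``$A=(0,a_4,a_5,a_6,a_7,a_8,a_9)\in\mathcal{D}_3$'' is the condition $a_3=\tr(A^2)=0$, a nontrivial constraint on the point, and not the cost-free normalization $a_1=a_2=0$ that you invoke. Consequently your argument only sends points with vanishing first coordinate to the zero point, and says nothing about a point $B=(b_3,b_4,\dots,b_9)\in\mathcal{D}_3$ with $b_3\neq 0$. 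The missing step is precisely the content of the paper's proof of this theorem: one first kills the coordinate $b_3$ by a linear substitution $\Theta_M$, namely $M=\left[\begin{smallmatrix}0&1\\-1&1\end{smallmatrix}\right]$ when $b_5=0$ (moving $b_3$ out of the first slot), and $M=\left[\begin{smallmatrix}\alpha&1\\0&1/\alpha\end{smallmatrix}\right]$ with $\alpha=\frac{-b_5}{b_4+\sqrt{b_4^2-b_3b_5}}$ when $b_5\neq0$, this $\alpha$ being a root of $b_3t^2+2b_4t+b_5=0$ so that the image has first coordinate $\alpha^2b_3+2\alpha b_4+b_5=0$; only after this reduction can Lemma \ref{000abcdef} be applied. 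Your closing observation that $g(A)=0$ and $g'(A')=0$ give $(g')^{-1}\circ g$ mapping $A$ to $A'$ is correct but is the trivial part of the proof.

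A secondary flaw: your justification that $\mathcal{D}_3$ is $G$-stable is circular. Proposition \ref{main_5eq} gives only the inclusion $\mathcal{C}_3\subseteq\mathbb{C}^2\times\mathcal{D}_3$; the assertion that the relations \eqref{id1z} ``cut out precisely the image of $\mathcal{C}_3$'' is, up to the identification of orbits, the statement of Theorem \ref{main} itself --- the very thing this transitivity theorem is designed to prove --- so it cannot be used as an ingredient here. If one wants to record invariance of $\mathcal{D}_3$ under $\Phi_p$, $\Psi_q$, $\Theta_M$, it must be verified on the explicit polynomial formulas for the action (or one must track, step by step, that the relations \eqref{id1z} are invoked only at points where they are known to hold, which is how the paper's lemmas are organized).
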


	\begin{proof} Let us show that the orbit of a zero point coincides with $\mathcal{D}_3$. Consider an arbitrary point $B=(b_3,b_4,b_5,b_6,b_7,b_8,b_9)\in \mathcal{D}_3$. If $b_3=0$, then by Lemma \ref{000abcdef} this point is in the orbit of the zero point.  Hence, suppose $b_3\neq 0$. 
		
		\textit{Case 1.} Let  $b_5=0$.
		
		For a matrix
		$M=\begin{bmatrix}
			0 & 1\\
			-1 & 1
		\end{bmatrix}$ we obtain $\Theta_M(B')=(0,b_4,b_3,b_6,b_7,b_8,b_9)$ and we apply Lemma \ref{000abcdef}. 
		
		\textit{Case 2.} Let $b_5\neq0$. Then  $b_4+\sqrt{b_4^2-b_3b_5} \neq 0$,  otherwise $b_3=0$.  For a matrix
		$M=\begin{bmatrix}
			\alpha & 1\\
			0 & 1/\alpha
		\end{bmatrix} $ with $\alpha=\frac{-b_5}{b_4+\sqrt{b_4^2-b_3b_5}}$, we have $\Theta_M(B')=(0,a_4,a_5,a_6,a_7,a_8,a_9)$. By Lemma \ref{000abcdef} we are done. 
	\end{proof}
	
	As a result, the statement of Theorem \ref{main} follows. 
	
	\section{Proof of Theorem \ref{main2}}\label{C_30_last} 
	
	Starting from a point $(0,0,\dots,0)\in \mathcal{C}_{3,0}$, using the action of $G$ we can only reach  points in the form $(a_1,a_2,0,\dots,0)$.  Similar to the arguments of section \ref{transitivity}, in order to find the $G$-orbits of $\mathcal{C}_{3,0}$ we can omit the first two coordinates and work with the variety $\mathcal{D}_{3,0}$ of seven-tuples.

	\begin{lemma}\label{a_3=0}
		Let $A=(a_3,a_4,a_5,a_6,a_7,a_8,a_9)\in \mathcal{D}_{3,0}$ with $a_3\neq0$. Then there exists $g\in G$ such that  $g(A)=(0,b_4,b_5,b_6,b_7,b_8,b_9)$.
	\end{lemma}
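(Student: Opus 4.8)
The plan is to reduce the statement to finding a nontrivial zero of a single binary quadratic form and then realizing that zero as the first row of an element of $\SL_2$. Recall from Section~\ref{transitivity} that for every $M=\begin{bmatrix}\alpha&\beta\\\gamma&\delta\end{bmatrix}\in\SL_2$ the map $\Theta_M$ belongs to $G$, and that it acts on the leading coordinate of the seven-tuple by $\Theta_M(a_3)=\alpha^2 a_3+2\alpha\beta a_4+\beta^2 a_5$. Consequently $g(A)$ will have vanishing first coordinate exactly when the first row $(\alpha,\beta)$ of the chosen $M$ is a zero of the binary quadratic form $q(X,Y)=a_3X^2+2a_4XY+a_5Y^2$. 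Since $\mathcal{D}_{3,0}$ is $G$-invariant (the relations \eqref{id2z} are preserved by $G$), the image $\Theta_M(A)$ will automatically remain in $\mathcal{D}_{3,0}$, so it suffices to produce such an $M$.

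First I would use the hypothesis $a_3\neq 0$, which guarantees that $q$ is not identically zero and that the dehomogenized equation $a_3t^2+2a_4t+a_5=0$ has a root $t_0\in\mathbb{C}$, $\mathbb{C}$ being algebraically closed. Setting $(\alpha,\beta)=(t_0,1)$ gives a nonzero vector annihilated by $q$. I would then complete it uniformly to a unimodular matrix by taking $\gamma=-1$ and $\delta=0$, so that $M=\begin{bmatrix}t_0&1\\-1&0\end{bmatrix}\in\SL_2$. Putting $g=\Theta_M\in G$, the explicit $\Theta_M$-formulas of Section~\ref{transitivity} yield $g(A)=(0,b_4,b_5,b_6,b_7,b_8,b_9)$ with leading coordinate $\Theta_M(a_3)=q(t_0,1)=0$, which is precisely the required form.

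There is no serious obstacle here; the argument is the mirror image of the computation carried out in Case~2 of the transitivity theorem in Section~\ref{transitivity}, with the roles of the first and third coordinates interchanged. The only point meriting a moment's care is the degenerate subcase $a_5=0$, where the chosen root may be $t_0=0$, i.e.\ $(\alpha,\beta)=(0,1)$; the uniform completion above still produces a valid element $M=\begin{bmatrix}0&1\\-1&0\end{bmatrix}\in\SL_2$ with $\Theta_M(a_3)=a_5=0$, so a single application of $\Theta_M$ settles every case at once.
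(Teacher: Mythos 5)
Your proof is correct and follows essentially the same approach as the paper: both arguments apply $\Theta_M$ where the first row $(\alpha,\beta)$ of $M\in\SL_2$ is chosen as a zero of the binary quadratic form $a_3X^2+2a_4XY+\beta^2$-coefficient $a_5$, i.e.\ a root of $a_3t^2+2a_4t+a_5=0$. The only (cosmetic) difference is that you complete the row to the unimodular matrix $\begin{bmatrix}t_0&1\\-1&0\end{bmatrix}$ uniformly in all cases, whereas the paper splits into $a_5=0$ and $a_5\neq0$ and uses two different completions.
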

	\begin{proof} The proof is a straight-forward verification of the following: if $a_5=0$ use $\Theta_M$ with 
		$M=\begin{bmatrix}
			0 & 1\\
			-1 & 1
		\end{bmatrix}$, and if  $a_5\neq0$ use $\Theta_M$ with 
		$M=\begin{bmatrix}
			\alpha & 1\\
			0 & \alpha^{-1}
		\end{bmatrix} $, where  $\alpha=\frac{-a_5}{a_4+\sqrt{a_4^2-a_3a_5}}$.
	\end{proof}

	\begin{lemma}\label{b_7=0}
		Let  $A\in \mathcal{D}_{3,0}\setminus \{[(0,0,\sqrt[3]{6},0,0,0,1)]\cup [(0,0,0,0,0,0,0)]\}$ be an arbitrary point of the form $(0,a_4,a_5,a_6,0,a_8,a_9)$. Then there exists $g\in G$ such that  $g(A)=(0,a_4',a_5',a_6',a_7',a_8',a_9')$  with $a_7'\neq 0$.
	\end{lemma}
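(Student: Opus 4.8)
The plan is to follow the scheme of Lemma~\ref{a_7=0}, exploiting how the relations~(\ref{id2z}) collapse on the locus $a_3=a_7=0$. First I would substitute $a_3=a_7=0$ into~(\ref{id2z}): the first three relations give $a_4a_8=0$, $a_5a_6=0$, $a_6a_8=0$, and the last two become $a_4^{3}=3a_6a_9$ and $a_4^{2}a_5+6a_8^{2}=0$. The crucial difference from the Calogero--Moser case is that the fifth relation now \emph{forces} $a_8=0$: if $a_8\neq0$ then $a_4=0$, whence $6a_8^{2}=0$, a contradiction. So on this locus $a_8=0$ and the remaining constraints are just $a_5a_6=0$, $a_4^{2}a_5=0$, $a_4^{3}=3a_6a_9$.

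The engine of the whole argument is the following move: once a representative with $a_6\neq0$ and $a_3=0$ is in hand, the linear map $\Psi_{\gamma x}$ (that is, $\Theta_M$ with $M=\left[\begin{smallmatrix}1&0\\ \gamma&1\end{smallmatrix}\right]$) fixes $a_3$ and sends $a_7\mapsto \gamma a_6+a_7=\gamma a_6$, which is nonzero for $\gamma\neq0$. Hence it suffices to manufacture $a_6\neq0$ while keeping $a_3=0$. If $a_6\neq0$ already, I am done. If $a_6=0$, then $a_4=0$ from $a_4^{3}=3a_6a_9$, so the point has the form $(0,0,a_5,0,0,0,a_9)$, and I split on $a_5$. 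When $a_5=0$ the explicit action of $\Psi_{\alpha x^{2}}$ keeps $a_3=0$ (its increment $2\alpha a_8+\tfrac16\alpha^{2}a_5^{2}$ vanishes) and sends $a_6\mapsto\tfrac13\alpha^{3}a_9^{2}$; if $a_9\neq0$ this yields $a_6\neq0$ and I reduce to the previous move, while $a_9=0$ is the excluded zero point.

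The delicate case, and the main obstacle, is $a_5\neq0$: here $\Psi_{\alpha x^{2}}$ unavoidably produces $a_3=\tfrac16\alpha^{2}a_5^{2}\neq0$, since a traceless $A$ of the form $p(B)$ with $\tr(A^{2})=0$ but $\tr(A^{2}B)\neq0$ cannot be created by one quadratic shear. The remedy is to leave the locus $a_3=0$ and then return to it. Writing $c$ for the shear parameter, $\Psi_{c x^{2}}$ produces $a_3'=\tfrac16c^{2}a_5^{2}$, $a_4'=ca_9$, $a_7'=\tfrac16c^{2}a_5a_9$, $a_8'=\tfrac16ca_5^{2}$, $a_9'=a_9$, and I then apply the transformation $\Theta_M$ of Lemma~\ref{a_3=0} (with $M=\left[\begin{smallmatrix}\alpha_0&1\\0&\alpha_0^{-1}\end{smallmatrix}\right]$, where $\alpha_0$ is the root of $a_3't^{2}+2a_4't+a_5'=0$ used there) that restores $a_3=0$. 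The heart of the proof is the resulting seventh coordinate: substituting the defining equation of $\alpha_0$ into $\Theta_M(a_7)=\alpha_0a_7'+2a_8'+\alpha_0^{-1}a_9$ gives
\[
\Theta_M(a_7)=\frac{c\,(a_5^{3}-6a_9^{2})}{3a_5}.
\]

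Thus for $c\neq0$ the construction succeeds exactly when $a_5^{3}\neq6a_9^{2}$. This quantity is a multiple of the discriminant of the characteristic polynomial of $B$, so $a_5^{3}=6a_9^{2}$ means $B$ has a repeated eigenvalue; then the point is $A=0$ together with such a $B$, and scaling by $\Theta_{\mathrm{diag}(s,s^{-1})}$ (followed, if needed, by $\Theta_{-I}$ to fix the sign of $a_9$) normalizes $(0,0,a_5,0,0,0,a_9)$ to $(0,0,\sqrt[3]{6},0,0,0,1)$, the excluded orbit. Hence every point permitted by the hypothesis has $a_5^{3}\neq6a_9^{2}$ and the displayed $a_7$ is nonzero, which finishes the proof. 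The only genuine subtlety is this last case, and it is conceptually forced: a repeated eigenvalue of $B$ makes any commuting $A$ with $\tr(A^{2})=0$ vanish, which is precisely the obstruction to $a_7\neq0$ and explains why that orbit must be removed from the statement.
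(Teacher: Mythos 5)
Your proof is correct and follows the same overall strategy as the paper's: force $a_8=0$ from the relations (\ref{id2z}), split on whether $a_6$ vanishes, and, in the hard case $a_6=0$, $a_5\neq 0$, compose a quadratic shear with an $\SL_2$-move restoring $a_3=0$, with the same dichotomy on $a_5^3-6a_9^2$ and the same identification of the excluded orbit $[(0,0,\sqrt[3]{6},0,0,0,1)]$. Where you differ is in the bookkeeping, and your version is tighter in two respects. First, your ``engine'' (the lower-triangular unipotent $\Theta_M$ with $M=\left[\begin{smallmatrix}1&0\\ \gamma&1\end{smallmatrix}\right]$, which fixes $a_3$ and sends $a_7\mapsto a_7+\gamma a_6$) replaces the ad hoc matrices the paper uses in its Case 1 and Case 2.1. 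Second, and more substantially, your closed-form computation
\[
\Theta_M(a_7)=\frac{c\,(a_5^3-6a_9^2)}{3a_5}
\]
merges the paper's Cases 2.2 ($a_9=0$) and 2.3 ($a_9\neq 0$) into a single calculation and makes completely transparent why exactly the orbit of $(0,0,\sqrt[3]{6},0,0,0,1)$ must be excluded; the paper instead exhibits a rather unwieldy explicit $\beta$ and asserts that ``we can choose $\alpha$ satisfying desired conditions.'' I checked your key cancellation: with $a_3'=\frac16 c^2a_5^2$, $a_4'=ca_9$, $a_7'=\frac16c^2a_5a_9$, $a_8'=\frac16 ca_5^2$, $a_9'=a_9$ and the defining relation $\frac16c^2a_5^2\alpha_0^2+2ca_9\alpha_0+a_5=0$, the $\alpha_0$-dependent terms in $\alpha_0a_7'+2a_8'+\alpha_0^{-1}a_9'$ do cancel, leaving $\frac13ca_5^2-\frac{2ca_9^2}{a_5}$ as claimed; note also that your shear increments correctly use the $v=0$ form of the action (no constant term in the cubic-trace shifts), which is the right action on $\mathcal{D}_{3,0}$ as opposed to $\mathcal{D}_3$.

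One caveat, not affecting validity: your closing gloss that ``a repeated eigenvalue of $B$ makes any commuting $A$ with $\tr(A^2)=0$ vanish'' is false as stated --- $A=\mathrm{diag}(1,\omega,\omega^2)$ with $\omega$ a primitive cube root of unity is nonzero, commutes with every diagonal $B$, and satisfies $\tr(A)=\tr(A^2)=0$ (one needs the full set of vanishing traces $a_3=a_4=a_6=a_7=a_8=0$, plus semisimplicity of the closed orbit, to conclude $A=0$). Fortunately this remark is not load-bearing: the actual argument, scaling by $\Theta_{\mathrm{diag}(s,s^{-1})}$ with $s^3=a_9$ to land the point in the excluded orbit, is what does the work, and it is sound.
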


	\begin{proof} Relations (\ref{id2z}) yield
		\[a_5a_6=0, \ a_4a_8=0, \ 3a_6a_9-a_4^3=0, \ -a_4^2a_5-6a_8^2=0, \ a_6a_8=0.
		\]
		If $a_8\neq0$, we obtain $a_4=a_6=0$ which implies $a_8=0$. Hence, we assume  $a_8=0$ and consider several cases.
		
		\textit{Case 1.} Let $a_6\neq0$. Then we have $a_5=0$, \ $3a_6a_9-a_4^3=0$ from \eqref{id2z} and  $A=(0,a_4,0,a_6,0,0,a_9)$. This implies
		$$\Phi_{-\frac{a_4}{a_6}t^2}(A)=(0,0,0,a_6,0,0,0).$$
		Now applying $\Theta_M$ with $M=\begin{bmatrix}
			1& -1\\
			1 & 0
		\end{bmatrix}$ we obtain the point $(0,0,0,a_6,a_6,a_6,a_6)$.
		
		\textit{Case 2.} Let $a_6=0$. Then we have $a_4=0$ and $A$ is  $(0,0,a_5,0,0,0,a_9)$. If $a_5=0$ and $a_9=0$ then the point is zero point. Contradiction.
		
		\textit{Case 2.1.} Let $a_5=0, a_9\neq0$. Then $A$ is  $(0,0,0,0,0,0,a_9)$. Now applying $\Theta_M$ with $M=\begin{bmatrix}
			1& 1\\
			0 & 1
		\end{bmatrix}$ we obtain the point $(0,0,0,a_9,a_9,a_9,a_9)$.
		
		\textit{Case 2.2.} Let $a_5\neq0, a_9=0$. Then $A$ is  $(0,0,a_5,0,0,0,0)$. Now applying $\Theta_M$ with $M=\begin{bmatrix}
			1& 1\\
			0 & 1
		\end{bmatrix}$ we obtain the point $(a_5,a_5,a_5,0,0,0,0)$.
		\[
		\Psi_{\alpha t^2}(a_5,a_5,a_5,0,0,0,0)=(0,a_5,a_5,\frac{2\sqrt{-6a_5^3}}{3},\frac{\sqrt{-6a_5^3}}{3},\frac{\sqrt{-6a_5^3}}{6},0)
		\]
		where $\alpha=\sqrt{-\frac6{a_5}}$. 
		
		\textit{Case 2.3.} Let $a_5\neq0, a_9\neq0$. Then $A$ is  $(0,0,a_5,0,0,0,a_9)$. Now applying $\Theta_M$ with $M=\begin{bmatrix}
			1& 1\\
			0 & 1
		\end{bmatrix}$ we obtain the point $(a_5,a_5,a_5,a_9,a_9,a_9,a_9)$. We can get
		\[
		\begin{split}
			\Phi_{\alpha^2t}(a_5,a_5,a_5&,a_9,a_9,a_9,a_9)=(a_5,a_5+a_9\alpha,a_5+2a-9\alpha+\frac{a_5^2}6\alpha^2,a_9,a_9+\frac{a_5^2}6\alpha,\\
			&a_9+\frac{a_5^2}3\alpha+\frac16a_5a_9\alpha^2,
			a_9+\frac{a_5^2}2\alpha+\frac12a_5a_9\alpha^2+(-\frac{a_5^3}{36}+\frac{a_9^2}3)\alpha^3)
		\end{split}
		\]
		
		If $a_5^3-6a_9^2\neq0$, then we can send it to the desired point by $\Psi_{\beta t}$, where $\beta=\frac{-6 a_5-6 a_9 \alpha -\sqrt{6(6 a_9^2 \alpha ^2-a_5^3 \alpha ^2)}}{6 a_5+12 a_9 \alpha +a_5^2 \alpha ^2}$ since we can choose $\alpha$ satisfying desired conditions.
		
		If $a_5^3-6a_9^2=0$, then applying $\Theta_M$ with $M=\begin{bmatrix}
			\sqrt[3]{a_9}& 1\\
			0 & \frac1{\sqrt[3]{a_9}}
		\end{bmatrix}$ we obtain the point $(0,0,\sqrt[3]{6},0,0,0,1)$, which is a contradiction. 
	\end{proof}

	\begin{proof}[Proof of Theorem \ref{main2}] 	Let $A\in \mathcal{D}_{3,0}\setminus \{[(0,0,\sqrt[3]{6},0,0,0,1)]\cup [(0,0,0,0,0,0,0)]\}$. Then we will prove that $A$ is in the orbit of the point $(0,0,0,0,0,0,1)$.
		
		By Lemma \ref{a_3=0} and Lemma \ref{b_7=0} we may assume that the point $A$ is of the form $(0,b_4,b_5,b_6,b_7,b_8,b_9)$ with $b_7\neq 0$. Now we consider two cases.
		
		\textit{Case 1}. Let $b_5\neq0$. For $p(t)=-\frac{b_5}{2b_7}t^2$, taking into account relations (\ref{id2z}) we find that  $$A':=\Phi_{p}(A)=(0,0,0,b_6,b_7,b_8,\frac{b_8^2}{b_7}).$$
		Then we have the following relation $b_6b_8-b_7^2=0$. Thus we may assume $b_6b_8\neq0$, otherwise we get a contradiction to $b_7\neq0$.
		For a matrix
		$M=\begin{bmatrix}
			\frac{b_8\sqrt[3]{b_6}}{b_7} & -\sqrt[3]{b_6}\\
			(1+\frac{b_8}{b_7})\frac{1}{\sqrt[3]{b_6}} & -\frac{1}{\sqrt[3]{b_6}}
		\end{bmatrix}$  we compute that  $\Theta_M(A')=(0,0,0,0,0,0,1)$.
		
		\textit{Case 2.} Let $b_5=0$. In this case it must be $b_4=0$ by equalities \eqref{id2z}. Hence the point is of the form $(0,0,0,b_6,b_7,b_8,b_9)$. Now we can repeat the above process (used to the point $A'$) to get the result.
	\end{proof}

\end{document}